\newtheorem{theorem}{Theorem}[section]
\newtheorem{lemma}[theorem]{Lemma}
\newtheorem{corollary}[theorem]{Corollary}
\theoremstyle{definition}
\theoremstyle{definitions}
\newtheorem{definition}[theorem]{Definition}
\newtheorem{remark}[theorem]{Remark}
\newtheorem{example}[theorem]{Example}
\theoremstyle{notations}
\theoremstyle{remarks}
\newcommand{\N}{\mathbb{N}}
\newcommand{\sub}{\subseteq}
\newcommand{\ov}{\overline}
\newcommand{\rg}{\rightarrow}
\newcommand{\lo}{\longrightarrow}
\newcommand{\wt}{\widetilde}
\newcommand{\vf}{\varphi}
\newcommand{\fr}{\frac}
\newcommand{\al}{\alpha}
\newcommand{\la}{\lambda}
\newcommand{\pa}{\partial}
\newcommand{\bt}{\beta}
\newcommand{\sq}{\simeq}
\newcommand{\psp}{\pi_1^{sp}(X,x)}
\newcommand{\pt}{\pi_1^{qtop}(X,x)}
\newcommand{\U}{\mathcal{U}}
\begin{document}

\author[H. Torabi, A. Pakdaman and B. Mashayekhy]
{Hamid Torabi$^\dag$, Ali Pakdaman$^\ddag$ and Behrooz Mashayekhy$^{\dag,*}$ }

\title[On topological fundamental groups of quotient spaces]
{On topological fundamental groups of quotient spaces}
\thanks{2010 {\it Mathematics Subject Classification}: 55P65; 55Q52; 55Q70}
\keywords{Topological fundamental group, Quasitopological fundamental group, Quotient map, Dense subgroup}
\thanks{$^*$Corresponding author}
\thanks{E-mail addresses: hamid$_{-}$torabi86@yahoo.com; a.pakdaman@gu.ac.ir and bmashf@um.ac.ir}
\maketitle

\begin{center}
{\it $^\dag$Department of Pure Mathematics, Center of Excellence in Analysis on Algebraic Structures, Ferdowsi University of Mashhad,\\
P.O.Box 1159-91775, Mashhad, Iran.}\\
{\it $^\ddag$Department of Mathematics, Faculty of Science, Golestan University,\\
P.O.Box 155, Gorgan, Iran.}
\end{center}

\vspace{0.4cm}
\begin{abstract}
 Let $p:X\rightarrow X/A$ be a quotient map, where $A$ is a subspace of $X$. We explore conditions under which $p_*(\pi_1^{qtop}(X,x_0))$ is dense in $\pi_1^{qtop}(X/A,*))$, where the fundamental groups enjoy the natural quotient topology inherited from the loop space and $p_*$ is the induced continuous homomorphism by the quotient map $p$. Also, we give some applications to find out some properties for $\pi_1^{qtop}(X/A,*)$. In particular, we give some conditions in which $\pi_1^{qtop}(X/A,*)$ is an indiscrete topological group.
\end{abstract}
\vspace{0.5cm}
%\\\\\\\\\\\\\\\\\\\\\\\\\\\\\\\\\\\\\\\\\\\\\\\\\\\\\\\\\\\\\\\\\\\\\\\\\\\\\\\\\\\\\\\\\\\\\\\\\\\\\\\\\\\\\\\\\\\\\\\\\\\\\\\\\\\\\\\\\
%=========================================================================================================================================
%/////////////////////////////////////////////////////////////////////////////////////////////////////////////////////////////////////////
\section{Introduction and Motivation}
Let $p:(X,x_0)\rightarrow (Y,y_0)$ be a continuous map of pointed topological spaces. By applying the fundamental group functor on $p$ there exists the induced homomorphism $$p_*:\pi_1(X,x_0)\lo\pi_1(Y,y_0).$$
It seems interesting to relate the homology and homotopy groups of $X$ with that of $Y$ using properties of $p$. Vietoris  first studied the problem with his mapping theorem \cite{V}. Also, Smale first discovered an analog of Vietoris's mapping theorem hold for homotopy groups \cite{Sm}. Recently, Calcut, Gompf, and Mccarthy \cite{C2} proved a generalization of Smale's theorem as follows:

{\it Let $p:(X,x_0)\rightarrow (Y,y_0)$ be a quotient map of topological spaces, where $X$ is locally path connected and $Y$ is semilocally simply connected. If each fiber $p^{-1}(y)$ is connected, then the induced homomorphism $p_*:\pi_1(X,x_0)\rightarrow\pi_1(Y,y_0)$ is surjective}.

For a pointed topological space $(X,x_0)$ by $\pi_1^{qtop}(X,x_0)$ we mean the topological fundamental group endowed with the quotient topology inherited from the loop space under the natural map $\Omega(X,x_0)\lo\pi_1(X,x_0)$ that makes it a \emph{quasitopological group}. A \emph{quasitopological group} $G$ is a group with a topology such that inversion $g\lo g^{-1}$ and all translations are continuous. For more details, see \cite{B, Br, C1}.
It is known that this construction gives rise a homotopy invariant functor $\pi_1^{qtop}:hTop_*\lo qTopGrp$ from the homotopy category of based spaces to the category of quasitopological groups and continuous homomorphisms \cite{Br}.
Also, $\pi_1^{\tau}(X,x_0)$ is the fundamental group endowed with another topology introduced by Brazas \cite{Br2}. In fact, the functor $\pi_1^{\tau}$ removes the smallest number of open sets from the topology of $\pt$ so that makes it a topological group.

Let $X$ be a topological space and $A_1,A_2, . . . ,A_n$ be a finite collection of its subsets. The quotient space $X/(A_1, . . . ,A_n)$ is obtained from $X$ by identifying each of the sets $A_i$ to a point.
Now, let $(A,a)$ be a pointed subspace of $(X,a)$ and $p:(X,a)\lo (X/A,*)$ be the associated quotient map. In this paper, first we prove that if $A$ is an open subset of $X$ such that the closure of $A$, $\ov{A}$, is path connected, then the image of $p_*$ is dense in $\pi_1^{qtop}(X/A,*)$. Then by this fact, we show that the image of $p_*$ is dense in $\pi_1^{qtop}(X/(A_1,A_2,...,A_n),*)$, where the $A_i$'s are open subsets of $X$ with path connected closures and $p:X\lo X/(A_1,A_2,...,A_n)$ is the associated quotient map. Second, we prove that if $A$ is a closed subset of a locally path connected and first countable space $X$, then the image of $p_*$ is also dense in $\pi_1^{qtop}(X/A,*)$. By the two previous results we can show that the image of $p_*$ is dense in $\pi_1^{qtop}(X/(A_1,A_2,...,A_n),*)$, where $X$ is first countable, connected, locally path connected and the $A_i$'s are open or closed subsets of $X$ with disjoint path connected closures. Moreover, we give some conditions in which $p_*$ is an epimorphism. Also, by some examples, we show that $p_*$ is not necessarily onto. Finally, we give some applications of the above results to find out some properties of the topological fundamental group of the quotient space $X/(A_1,A_2,...,A_n)$. In particular, we prove that with the recent assumptions on $X$ and the $A_i$'s, $\pi_1^{qtop}(X/(A_1,A_2,...,A_n),*)$ is an indiscrete topological group when $X$ is simply connected. It should be mentioned that since the topology of $\pi_1^{\tau}(X,x_0)$ is coarser than $\pi_1^{qtop}(X,x_0)$, the above results can be obtained when we replace $\pi_1^{qtop}$ with $\pi_1^{\tau}$.
%\\\\\\\\\\\\\\\\\\\\\\\\\\\\\\\\\\\\\\\\\\\\\\\\\\\\\\\\\\\\\\\\\\\\\\\\\\\\\\\\\\\\\\\\\\\\\\\\\\\\\\\\\\\\\\\\\\\\\\\\\\\\\\\\\\\\\\\\\\\\\\\\\\
%==================================================================================================================================================
%//////////////////////////////////////////////////////////////////////////////////////////////////////////////////////////////////////////////////
\section{Notations and preliminaries}
For a topological space $X$, by a path in $X$ we mean a continuous map $\al : [0, 1]\lo X$. The
points $\al(0)$ and $\al(1)$ are called the initial point and the terminal point of $\al$, respectively.
A loop $\al$ is a path with $\al(0)=\al(1)$. For
a path $\al:[0,1]\lo X$, $\al^{-1}$ denotes a path such that $\al^{-1}(t)=\al(1-t)$, for all $t\in [0,1]$.
Denote $[0,1]$ by $I$, two paths $\al, \beta:I\lo X$ with the same initial and terminal points are called homotopic relative to end points if there exists a continuous map $F:I\times I\lo X$ such that
\begin{displaymath}
F(t,s)= \left\{
\begin{array}{lr}
\al(t)    &       s=0 \\
\beta(t)    &      s=1\\
\al(0)=\beta(0)   &  t=0\\
\al(1)=\beta(1)    &  t=1.
\end{array}
\right.
\end{displaymath}
The homotopy is an equivalent relation and the homotopy class containing a path $\al$ is denoted by $[\al ]$. Since most of the homotopies that
appear in this paper have this property and end points are the same, we drop the term ``relative homotopy'' for
simplicity. For paths $\al, \beta:I\lo X$ with $\al(1)=\beta(0)$, $\al*\beta$ denotes the concatenation of $\al$ and $\beta$ that is a path
from $I$ to $X$ such that $(\al*\beta)(t)=\al(2t)$, for all $0\leq t\leq 1/2$ and $(\al*\beta)(t)=\beta(2t-1)$,
for all $1/2\leq t\leq1$.

For a pointed topological space $(X,x)$, let $\Omega(X,x)$ be the space of based maps from $I$ to $X$ with the compact-open topology. A subbase
for this topology consists of neighborhoods of the form $\langle K,U\rangle=\{\gamma\in\Omega(X,x)\ |\ \gamma(K)\sub U\}$, where $K\sub I$ is compact
and $U$ is open in $X$. When $X$ is path connected and the basepoint is clear, we just write $\Omega(X)$ and we will consistently denote the constant path at $x$ by $e_x$. The topological fundamental group of a pointed space $(X,x)$ can be described as the usual
fundamental group $\pi_1(X,x)$ with the quotient topology with respect to the canonical map $\Omega(X,x)\lo\pi_1(X,x)$ identifying
homotopy classes of loops, denoted by $\pi_1^{qtop}(X,x)$. A basic account of topological fundamental groups may be found in \cite{B}, \cite{C1} and \cite{Br}. For undefined notation, see \cite{M}.
\begin{definition}(\cite{A}).
A quasitopological group $G$ is a group with a topology such that inversion $G\lo G$, $g\mapsto
g^{-1}$, is continuous and multiplication $G \times G\lo G$ is continuous in each variable. A morphism of quasitopological groups is a continuous homomorphism.
\end{definition}
\begin{theorem} (\cite{Br}).
$\pi_1^{qtop}$ is a functor from the homotopy category of based topological spaces to the category of quasitopological groups.
\end{theorem}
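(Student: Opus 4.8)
The plan is to verify the three ingredients packaged into this statement: that each $\pi_1^{top}(X,x)$ is a quasitopological group, that a based map $f:(X,x)\lo(Y,y)$ induces a continuous homomorphism $f_*$, and that this assignment is functorial and factors through based homotopy. The single tool underlying almost every step is the universal property of a quotient map: a map out of $\pi_1^{top}(X,x)$ is continuous precisely when its composite with the canonical identification $\pi:\Omega(X,x)\lo\pi_1(X,x)$ is continuous. So I would reduce each continuity assertion at the level of fundamental groups to a continuity assertion at the level of loop spaces with the compact-open topology, where one can argue directly.

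For the quasitopological group structure I would treat inversion and separate multiplication individually. For inversion, consider the reversal map $r:\Omega(X,x)\lo\Omega(X,x)$, $r(\al)=\al^{-1}$; precomposition by the self-homeomorphism $t\mapsto 1-t$ of $I$ shows $r$ is continuous (the preimage of a subbasic $\langle K,U\rangle$ is $\langle 1-K,U\rangle$), and since homotopic loops have homotopic reverses, $\pi\circ r$ is constant on the fibres of $\pi$ and descends to inversion on $\pi_1^{top}(X,x)$, which is therefore continuous. For continuity in each variable of multiplication, fix a loop $\bt$ and consider $c_\bt:\al\mapsto\al*\bt$ on $\Omega(X,x)$; concatenation with a fixed path is continuous for the compact-open topology, and $\pi\circ c_\bt$ equals right translation by $[\bt]$ composed with $\pi$, so right translation, and symmetrically left translation, is continuous. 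This yields the quasitopological group axioms.

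Next I would handle the induced map. A based map $f$ yields $\Omega f:\Omega(X,x)\lo\Omega(Y,y)$, $\al\mapsto f\circ\al$, which is continuous for the compact-open topologies (the preimage of $\langle K,U\rangle$ is $\langle K,f^{-1}(U)\rangle$), and the square relating $\pi_X$, $\pi_Y$, $\Omega f$ and the classical homomorphism $f_*$ commutes, since $f_*\pi_X(\al)=[f\circ\al]=\pi_Y\Omega f(\al)$. As $\pi_Y\circ\Omega f$ is continuous and constant on $\pi_X$-fibres, the universal property makes $f_*$ continuous, while the fact that $f_*$ is a homomorphism is the classical statement about $\pi_1$. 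Functoriality, $(\T{id})_*=\T{id}$ and $(g\circ f)_*=g_*\circ f_*$, follows from the corresponding identities for $\Omega(-)$ and uniqueness in the universal property; and homotopy invariance is immediate, since a based homotopy $f\sq g$ already forces $f_*=g_*$ on $\pi_1$, so the already-continuous morphisms coincide and the construction descends to the based homotopy category.

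The subtle point, and the reason the target is quasitopological rather than topological groups, is exactly that I would only establish separate continuity of multiplication. Joint continuity would require the product $\pi\times\pi:\Omega(X,x)\times\Omega(X,x)\lo\pi_1^{top}(X,x)\times\pi_1^{top}(X,x)$ to be a quotient map, so that continuity of the concatenation map $\Omega(X,x)\times\Omega(X,x)\lo\Omega(X,x)$ could be pushed down to multiplication; but a product of quotient maps need not be a quotient map, and this is precisely where a naive argument for a genuine topological group structure breaks down. Thus the main obstacle is not any single computation but the discipline of invoking the universal property only for the single quotient map $\pi$, one variable at a time, which is exactly what the quasitopological, rather than topological, conclusion records.
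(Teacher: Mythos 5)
Your proposal is correct: the reduction of every continuity claim to the universal property of the quotient map $\pi:\Omega(X,x)\lo\pi_1(X,x)$, the subbasic computations for loop reversal, one-sided concatenation, and $\Omega f$, and the observation that joint continuity would require $\pi\times\pi$ to be a quotient map (which products of quotient maps need not be) together give a complete argument. The paper itself offers no proof of this statement --- it is quoted from Calcut--McCarthy \cite{C1} (see also Brazas \cite{Br}) --- and your argument is essentially the standard one given in those references, including the correct diagnosis of why the target category is quasitopological rather than topological groups.
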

A space $X$ is called {\it semi-locally simply connected} if for each point $x\in X$, there is an open
neighborhood $U$ of $x$ such that the inclusion $i : U\hookrightarrow X$ induces the trivial homomorphism $i_* : \pi_1(U,x)\lo \pi_1(X,x)$ or equivalently a loop in $U$ can be contracted inside $X$.
\begin{theorem} (\cite{Br}).
Let $X$ be a path connected space. If $\pi_1^{qtop}(X, x)$ is discrete for some $x\in X$, then $X$ is semi-locally simply connected. If $X$ is locally path connected and semi-locally simply connected, then $\pi_1^{qtop}(X, x)$ is discrete for all $x\in X$.
\end{theorem}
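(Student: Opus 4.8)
The plan is to prove the two implications separately, in each case passing through the identification map $\pi:\Omega(X,x)\lo\pi_1(X,x)$ and translating discreteness into a statement about the subbasic sets $\langle K,U\rangle$ of the compact-open topology.

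For the first implication I would begin by removing the dependence on the particular basepoint. Since $X$ is path connected, for any $y$ and any path $\la$ from $y$ to $x$ the assignment $\gamma\mapsto\la*\gamma*\la^{-1}$ defines a map $\Omega(X,x)\lo\Omega(X,y)$ which is continuous for the compact-open topology (concatenation with fixed paths is continuous) and which descends through $\pi$ to the classical change-of-basepoint isomorphism; running the same construction with $\la^{-1}$ gives a continuous inverse, so $\pi_1^{top}(X,x)\cong\pi_1^{top}(X,y)$ as quasitopological groups and discreteness at $x$ propagates to every $y$. It then suffices to derive semi-local simple connectivity at an arbitrary $y$ from discreteness of $\pi_1^{top}(X,y)$: discreteness makes $\{[e_y]\}$ open, hence $\pi^{-1}([e_y])$ --- the set of null-homotopic loops at $y$ --- is an open set containing $e_y$, so it contains a basic neighbourhood $\bigcap_{i=1}^n\langle K_i,U_i\rangle$. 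As $e_y$ is constant at $y$, each $U_i$ (with $K_i\neq\emptyset$) contains $y$, so $V:=\bigcap_iU_i$ is an open neighbourhood of $y$, and any loop with image in $V$ lies in every $\langle K_i,U_i\rangle$, hence is null-homotopic in $X$. Thus $i_*:\pi_1(V,y)\lo\pi_1(X,y)$ is trivial, which is semi-local simple connectivity at $y$.

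For the second implication I would show that every fibre of $\pi$ is open, which is precisely discreteness of the quotient. Fix a loop $\gamma$ at $x$. Combining semi-local simple connectivity with local path connectedness, cover $X$ by path-connected open sets $W$ whose loops contract in $X$; pulling this cover back along $\gamma$ and using compactness of $I$, choose a partition $0=t_0<\cdots<t_m=1$ with $\gamma([t_{j-1},t_j])\sub W_j$ for such sets $W_j$. At each interior break point use local path connectedness again to pick a path-connected open $O_j$ with $\gamma(t_j)\in O_j\sub W_j\cap W_{j+1}$, and form the basic neighbourhood $\mathcal N=\bigcap_{j=1}^m\langle[t_{j-1},t_j],W_j\rangle\cap\bigcap_{j=1}^{m-1}\langle\{t_j\},O_j\rangle$ of $\gamma$. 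The goal is to prove $\mathcal N\sub\pi^{-1}([\gamma])$, i.e.\ that every $\delta\in\mathcal N$ is homotopic to $\gamma$ rel endpoints.

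This last step is where the real work lies, and I expect it to be the main obstacle. For $\delta\in\mathcal N$ both $\gamma$ and $\delta$ map $[t_{j-1},t_j]$ into $W_j$ and both send $t_j$ into $O_j$, so there is a connecting path $\sigma_j\sub O_j\sub W_j\cap W_{j+1}$ from $\gamma(t_j)$ to $\delta(t_j)$, with $\sigma_0$ and $\sigma_m$ constant at $x$. On the $j$-th block the loop $\gamma|_{[t_{j-1},t_j]}*\sigma_j*(\delta|_{[t_{j-1},t_j]})^{-1}*\sigma_{j-1}^{-1}$ lies entirely in $W_j$, hence contracts in $X$, giving $\gamma|_{[t_{j-1},t_j]}\sq\sigma_{j-1}*\delta|_{[t_{j-1},t_j]}*\sigma_j^{-1}$ rel endpoints. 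Concatenating these block homotopies, the inserted factors $\sigma_j*\sigma_j^{-1}$ cancel and $\sigma_0,\sigma_m$ disappear, yielding $\gamma\sq\delta$. Therefore $\mathcal N$ meets a single homotopy class, each fibre $\pi^{-1}([\gamma])$ is a union of such open sets and hence open, and $\pi_1^{top}(X,x)$ is discrete; since the construction used the same hypotheses at every basepoint, discreteness holds for all $x$. The delicate points to watch are the careful choice of the sets $O_j$ so that the connecting paths can be taken inside the intersections $W_j\cap W_{j+1}$, and the verification that the telescoping of the block homotopies is valid relative to endpoints.
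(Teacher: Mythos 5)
This theorem is not proved in the paper at all---it is quoted from Calcut--McCarthy \cite{C1}---so the only meaningful comparison is with that reference, and your argument is in fact the standard one given there: it is correct. Both halves check out: openness of $\pi^{-1}([e_y])$ in $\Omega(X,y)$ yields the semi-locally simply connected neighbourhood $V=\bigcap_i U_i$ (with discreteness first transported to every basepoint via the continuous path-conjugation maps, which is exactly the homogeneity point of \cite{C1}), and in the converse direction your Lebesgue-number/telescoping-homotopy argument correctly shows each fibre $\pi^{-1}([\gamma])$ is open, the only elided (but routine) step being that a path-connected open subset of a set witnessing semi-local simple connectivity kills \emph{all} loops in it, based at any of its points, by conjugating with a connecting path inside it.
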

%/////////////////////////////////////////////////////////////////////////////////////////////////////////////////////////////////////////////////
%================================================================================================== ==============================================
%\\\\\\\\\\\\\\\\\\\\\\\\\\\\\\\\\\\\\\\\\\\\\\\\\\\\\\\\\\\\\\\\\\\\\\\\\\\\\\\\\\\\\\\\\\\\\\\\\\\\\\\\\\\\\\\\\\\\\\\\\\\\\\\\\\\\\\\\\\\\\\\\\
\section{Main Results}
In this section, $(A,a)$ is a pointed subspace of $(X,a)$, $p:(X,a)\lo (X/A,*)$ is the canonical quotient map so that $q:=p|_{X-A}:X-A\lo X/A-\{*\}$ is a homeomorphism. Also, by applying the functor $\pi_1^{qtop}$ on $p$ we have a continuous homomorphism $p_*:\pi_1^{qtop}(X,a)\lo\pi_1^{qtop}(X/A,*)$.
\begin{lemma}
If $A$ is an open subset of $X$, then any loops $\al:I\lo \ov{\{*\}}\sub X/A$ based at $*$ is nullhomotopic.
\end{lemma}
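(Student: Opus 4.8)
The plan is to work entirely inside the subspace $S:=\overline{\{*\}}$ of $X/A$, in which the loop $\al$ takes its values, and to show that $S$ is contractible relative to $*$; this immediately forces $\al$ to be nullhomotopic. The one structural fact I would extract from the definition of closure is the following: \emph{every nonempty open subset of $S$ contains the point $*$}. Indeed, a nonempty relatively open subset of $S$ has the form $V\cap S$ with $V$ open in $X/A$, and it contains some point $y\in S=\overline{\{*\}}$; since every open neighborhood of a point of $\overline{\{*\}}$ must meet $\{*\}$, we get $*\in V$ and hence $*\in V\cap S$. (It is worth noting that the hypothesis ``$A$ open'' makes $\{*\}$ itself open in $X/A$ and gives $\overline{\{*\}}=p(\overline{A})$, a useful picture; but the contraction below needs only the displayed property.)

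Next I would build the contraction explicitly. Define $H:S\times I\lo S$ by $H(s,0)=s$ and $H(s,t)=*$ for $t>0$. This fixes the basepoint, $H(*,t)=*$ for all $t$, and satisfies $H(\cdot,0)=\mathrm{id}_S$ and $H(\cdot,1)\equiv *$, so once continuity is verified it is a strong deformation retraction of $S$ onto $*$.

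The crux — and the step I expect to be the only real obstacle — is the continuity of this ``instantaneous jump'' homotopy. For an open $V\sub S$ one computes
\[
H^{-1}(V)=\bigl(S\times(0,1]\bigr)\cup\bigl(V\times\{0\}\bigr)\quad\text{if }*\in V,
\]
while $H^{-1}(V)=\varnothing$ if $*\notin V$, because by the displayed property such a $V$ must be empty. In the first case openness follows from the product topology: every point $(s,t)$ with $t>0$ lies in the open set $S\times(0,1]\sub H^{-1}(V)$, and every point $(s,0)$ with $s\in V$ lies in the open box $V\times[0,1)\sub H^{-1}(V)$. Hence $H$ is continuous.

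Finally, postcomposing the loop with the contraction yields the nullhomotopy directly: the map $G:I\times I\lo S$, $G(u,t)=H(\al(u),t)$, is continuous with $G(u,0)=\al(u)$, $G(u,1)=*$ and $G(0,t)=G(1,t)=H(*,t)=*$, so it is a based homotopy from $\al$ to the constant loop $e_*$. Since a nullhomotopy inside $S\sub X/A$ is in particular a nullhomotopy in $X/A$, this proves the lemma.
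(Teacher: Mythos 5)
Your proof is correct and is essentially the paper's own argument: both rest on the ``instant jump'' homotopy (equal to $\al$ at time $0$ and constantly $*$ for all positive times), whose continuity comes down to the same observation in both cases, namely that an open set not containing $*$ is disjoint from $\overline{\{*\}}$, so its preimage is either easily seen to be open or empty. The only difference is packaging --- you first contract the subspace $S=\overline{\{*\}}$ to $*$ and then compose with $\al$, whereas the paper defines the homotopy $F:I\times I\lo X/A$ directly; your composite $G(u,t)=H(\al(u),t)$ is precisely the paper's $F$.
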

\begin{proof}
Define $F:I\times I\lo X/A$ by\\
\begin{displaymath}
F(t,s)= \left\{
\begin{array}{lr}
\al(t)    &       s=0 \\
*    &       s>0.
\end{array}
\right.
\end{displaymath}
If we prove that $F$ is continuous, then $F$ is a homotopy between $\al$ and $e_*$. For this, let $U$ be an open set in $X/A$. We show that $F^{-1}(U)$ is open in $I\times I$.\\
\emph{Case 1}: If $*\in U$, then $F^{-1}(U)=F^{-1}(\{*\})\cup (\al^{-1}(U)\times \{0\})$\\
 $$=((I\times (0,1])\cup (\al^{-1}(U)\times\{0\})$$
$$=(I\times (0,1])\cup (\al^{-1}(U)\times I)$$
which is open in $I\times I$.\\
\emph{Case 2}: If $*\notin U$, then $U\cap \partial \{*\}=\varnothing$ since if there exists $x\in \partial \{*\}$ such that $x\in U$, then $\{*\}\cap U\neq\varnothing$ which is a contradiction. Since $U\cap \ov{\{*\}}=(U\cap \{*\})\cup (U\cap\partial \{*\})=\varnothing$ and $\al(I)\subseteq \ov{\{*\}}$, we have $F^{-1}(U)=\varnothing$.
\end{proof}
%---------------------------------------------------------------------------------------------------------------------------------------------
\begin{theorem}
 Let $A$ be an open subset of $X$ such that $\overline{A}$ is path connected, then for each $a\in A$ the image of $p_*$ is dense in $\pi_1^{qtop}(X/A,*)$ i.e.
 $$\overline{p_*\pi_1^{qtop}(X,a)}=\pi_1^{qtop}(X/A,*) .$$
 \end{theorem}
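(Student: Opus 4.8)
The plan is to exploit the quotient description of the topology and reduce the statement to an approximation problem in the loop space. Since the canonical map $\pi:\Omega(X/A,*)\lo\pi_1^{top}(X/A,*)$ is a continuous surjection, and the image of a dense set under a continuous surjection is dense, it suffices to prove that the set of loops $\delta\in\Omega(X/A,*)$ with $[\delta]\in p_*\pi_1^{top}(X,a)$ is dense in $\Omega(X/A,*)$. Thus I fix a loop $\gamma$ together with a basic compact-open neighbourhood $N=\bigcap_{j=1}^n\langle K_j,U_j\rangle$ of $\gamma$, and I aim to produce a loop $\delta\in N$ that lifts through $p$, i.e. $\delta=p\circ\beta$ for some $\beta\in\Omega(X,a)$; then $[\delta]=p_*[\beta]$ lies both in the image and in $N$, which gives density.

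First I would record the structural facts that make the openness of $A$ usable. Because $A$ is open, $\{*\}=p(A)$ is open in $X/A$, so $\Sigma:=\gamma^{-1}(\{*\})$ is an open subset of $I$, hence a countable union of disjoint open subintervals; on the complementary closed set $I\setminus\Sigma$ the loop $\gamma$ takes values in $X/A\setminus\{*\}$, where $q^{-1}$ is a homeomorphism onto $X\setminus A$, so $\gamma$ lifts there continuously as $q^{-1}\circ\gamma$. One also checks that $p^{-1}(\ov{\{*\}})=\ov A$ and $p(\ov A)=\ov{\{*\}}$, so at the endpoints of the intervals of $\Sigma$ this lift lands in the frontier $\ov A\setminus A$. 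Finally, the crucial consequence of openness: whenever $\gamma(K_j)$ meets $\ov{\{*\}}$, the set $U_j$ is an open set meeting $\ov{\{*\}}$ and therefore contains $*$; this is what will give room to reroute a lift near the basepoint without leaving $N$.

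Next I would build $\beta$ and set $\delta=p\circ\beta$. On $I\setminus\Sigma$ put $\beta=q^{-1}\circ\gamma$, a continuous map into $X\setminus A$ whose values at the endpoints of the intervals of $\Sigma$ lie in $\ov A$. On each interval of $\Sigma$ the naive lift would jump between two such frontier points, so I would bridge the two endpoint values by a path inside $\ov A$, which exists because $\ov A$ is path connected; routing the first and last bridges through $a$ turns $\beta$ into a genuine loop based at $a$. By construction $\delta=p\circ\beta$ equals $\gamma$ on $I\setminus\Sigma$ and takes values in $p(\ov A)=\ov{\{*\}}$ on $\Sigma$. It then remains to check $\delta\in N$: on the part of each $K_j$ meeting $I\setminus\Sigma$ we have $\delta=\gamma\in U_j$, while on the part meeting $\Sigma$ we have $\delta\in\ov{\{*\}}$ together with $*\in U_j$ by the previous paragraph, so one needs the bridging paths to project into the prescribed sets $U_j$.

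The hard part will be exactly this last reconciliation. Liftability forces each bridge to be a genuine, positive-time path in $\ov A$, whose projection necessarily runs through $\ov{\{*\}}$; closeness, on the other hand, demands that this projection stay inside the $U_j$, which contain $*$ but need not contain all of $\ov{\{*\}}$ (indeed $\{*\}$ itself is open). I would handle this by performing the frontier-to-basepoint excursions only at times of $\Sigma$ that are not forced to the value $*$ by some $K_j$, keeping $\delta=*$ (equivalently $\beta\in A$) elsewhere on $\Sigma$, and invoking the preceding Lemma to absorb the resulting loops in $\ov{\{*\}}$; the openness of $A$, which guarantees $*\in U_j$ on every piece where $\gamma$ touches $\ov{\{*\}}$, is precisely what makes such a placement available. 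A secondary technical point, when $\Sigma$ has infinitely many components, is to keep $\beta$ continuous while inserting the bridges; this can be controlled using compactness of the $K_j$, since only finitely many components of $\Sigma$ are relevant to remaining inside $N$. Carrying out this placement carefully is, I expect, the real content of the argument; granting it, $\delta\in N$ and $[\delta]=p_*[\beta]$ lies in the image, so $p_*\pi_1^{top}(X,a)$ meets every basic neighbourhood and is therefore dense.
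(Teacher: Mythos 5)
Your opening reduction is sound: since $\pi:\Omega(X/A,*)\lo\pi_1^{top}(X/A,*)$ is a continuous surjection, density of $\{\delta\in\Omega(X/A,*)\ :\ [\delta]\in Im(p_*)\}$ in the loop space does imply density of $Im(p_*)$, and this is essentially how the paper argues (it exhibits sequences of loops converging to a given loop whose classes lie in the image). Your structural facts are also correct: the lift $q^{-1}\circ\gamma$ on $I\setminus\Sigma$ has frontier values in $\ov{A}\setminus A$, and $*\in U_j$ whenever $\gamma(K_j)$ meets $\ov{\{*\}}$; your one-bridge picture is exactly the paper's Step One. The gap is your requirement that the approximating loop $\delta$ agree with $\gamma$ on \emph{all} of $I\setminus\Sigma$ and take values in $\ov{\{*\}}$ on $\Sigma$. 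Any such $\delta$ is automatically homotopic rel $\partial I$ to $\gamma$ itself: the map $F(t,s)=\delta(t)$ for $s=0$ or $t\notin\Sigma$, and $F(t,s)=*$ for $s>0$ and $t\in\Sigma$, is continuous by the same argument as Lemma 3.1 (using that $\Sigma$ is open and that every open set meeting $\ov{\{*\}}$ contains $*$), and $F(\cdot,1)=\gamma$. Hence $[\delta]=[\gamma]$, so your construction can produce a class of $Im(p_*)$ inside $N$ only if $[\gamma]\in Im(p_*)$ already; in other words, were your plan to succeed for every $\gamma$, it would prove that $p_*$ is \emph{surjective}. That is false under the hypotheses of this very theorem: in the paper's Example 3.7, $A$ is open with path connected closure, yet there is a loop $\al$ with $[\al]\notin Im(p_*)$, and for $\gamma=\al$ no placement of excursions inside $\Sigma$ can help, because every admissible $\delta$ still has $[\delta]=[\al]$.

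What you flagged as a ``secondary technical point'' is therefore the fatal one, and it concerns the components of $I\setminus\Sigma$, not of $\Sigma$: when $\gamma$ exits the basepoint region infinitely often, a genuine lift $\beta$ would need a bridge through $\ov{A}$ across each of the infinitely many separating components of $\Sigma$, and path connectedness of $\ov{A}$ gives no control on the size of these bridges, so $\beta$ cannot be continuous at accumulation points (in Example 3.7 every bridge must descend to the base of the comb and has diameter bounded below). The missing idea, which is the actual content of the paper's Step Two, is that the approximation must also be allowed to modify $\gamma$ on $I\setminus\Sigma$: define $\al_n$ to equal $\gamma$ on only finitely many components $[a_i,b_i]$ of $I\setminus\Sigma$ and to equal $*$ everywhere else. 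One then checks three things: the $\al_n$ are continuous (their values at the points $a_i,b_i$ lie in $\ov{\{*\}}$); each $[\al_n]$ lies in $Im(p_*)$, since now only finitely many bridges through $\ov{A}$ are needed and the discrepancies are loops in $\ov{\{*\}}$ absorbed by Lemma 3.1; and $\al_n\lo\gamma$ in the compact-open topology, which uses compactness of the $K_j$, the fact that only finitely many components of $I\setminus\Sigma$ have length at least $1/n$ (so their diameters tend to $0$), and precisely your observation that $*\in U_j$ whenever $\gamma(K_j)$ meets $\ov{\{*\}}$. With that replacement your neighborhood-by-neighborhood formulation goes through and coincides with the paper's proof.
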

\begin{proof}
\emph{Step One}: First, we show that for every loop $\al:(I,\partial I)\lo (X/A,*)$ such that $\al^{-1}(\{*\}^c)$ is connected, we have $[\al]\in Im(p_*)$.
 By assumption and openness of $\{*\}$ in $X/A$, there exist $s_1,s_2\in (0,1)$ such that $\al^{-1}(\{*\}^c)=[s_1,s_2]$. Since $\al^{-1}(\{*\}^c)$ is a compact subset of $I$, it suffices to let $s_1=inf\{\al^{-1}(\{*\}^c)\}$ and $s_2=sup\{\al^{-1}(\{*\}^{c})\}$. Let $\overline{\al}:[s_1,s_2]\lo X$ by $\overline{\al}(t)=q^{-1}(\al(t))$, then $\overline{\al}(s_1),\overline{\al}(s_2)\in \ov{A}$. Since if $G$ is an open neighborhood of $\overline{\al}(s_1)$ and $G\cap A=\varnothing$, then $q(G)=p(G)$ is an open neighborhood of $\al(s_1)$. Using continuity of $\al$, there exists an open neighborhood $J$ of $s_1$ in $I$ such that $\al(J)\subseteq G$. On the other hand, by definition of $s_1$, for all $s<s_1$, $\al(s)=*$ which implies that $*\in q(G)$ which is a contradiction since $*\notin Im(q)$. Similarly $\al(s_2)\in\ov A$. Since $\ov A$ is path connected, there exist two paths $\lambda_1:[0,s_1]\lo \ov A$ and $\lambda_2:[s_2,1]\lo \ov A$ such that $\lambda_1(0)=\lambda_2(1)=a,\ \lambda_1(s_1)=\overline{\al}(s_1)$ and $\lambda_2(0)=\overline{\al}(s_2)$. Define $\widetilde{\al}:I\lo X$ by
\begin{displaymath}
\wt{\al}(t)= \left\{
\begin{array}{lr}
\lambda_1(t)    &       0\leq t\leq s_1 \\
\ov{\al}(t)    &       s_1\leq t\leq s_2 \\
\lambda_2(t)    &       s_2\leq t\leq 1.
\end{array}
\right.
\end{displaymath}
By gluing lemma $\wt{\al}$ is continuous, so it remains to show that $p\circ \wt{\al}\simeq\al\ \ rel\{*\}$. Put $\al_1=\al|_{[0,s_1]}$, $\al_2=\al|_{[s_2,1]}$ and let $\vf_1:[0,1]\lo [0,s_1]$ and $\vf_2:[0,1]\lo [s_2,1]$ be linear homeomorphisms such that $\vf_1(0)=0$ and $\vf_2(0)=s_2$, then $p\circ \lambda_i\circ\vf_i\simeq\al_i\circ\vf_i,\ rel\{0,1\}$ since the $(p\circ \lambda_i\circ\vf_i)\circ(\al_i\circ\vf_i)^{-1}$'s are loops in $\overline{\{*\}}$ which by Lemma 3.1 are nullhomotopic.\\
\emph{Step Two}: By continuity of $\al$, $\al^{-1}(\{*\}^c)$ is a closed subset of $I$. Connected subsets of $I$ are intervals or one point sets, also connected components of $\al^{-1}(\{*\}^c)$ are closed in $\al^{-1}(\{*\}^c)$ and so they are compact in $I$. Therefore a component of $\al^{-1}(\{*\}^c)$ is either  closed interval or singleton. Given $[\al]\in \pi_1(X/A,*)$, we show that there exists a sequence of homotopy classes of loops $\{[\al_n]\}_{n\in N}$ in $Im(p_*)$ such that $[\al_n]\lo [\al]$ in $\pi_1^{qtop}(X/A,*)$.

We claim that the number of non-singleton components of $\al^{-1}(\{*\}^c)$ is countable. Let $S$ be the union of singleton components of $\al^{-1}(\{*\}^c)$ and for each $n\in\N$, $B_n$ be the set of non-singleton components of $\al^{-1}(\{*\}^c)$  with length at least $1/n$. Each $B_n$ is finite since if $B_n$ is infinite, then it has at least $n+1$ members. Therefore $\bigcup_{C\in B_n} C \subseteq \al^{-1}(\{*\}^c)\subseteq I$ which implies that $$(n+1)\times 1/n\leq\sum_{C\in B_n}diam(C)\leq diam(I)=1$$
 which is a contradiction. Thus each $B_n$ is finite which implies that $B=\bigcup_{n\in\N} B_n$ is countable. Rename elements of $B$ by $I_i=[a_i,b_i]$, $i\in J=\{1,2,...,s\} $, where $s=|B|$ if $B$ is finite and $i\in\mathbb{N}=J$ if $B$ is infinite. For every $n\in J$ define\\
\begin{displaymath}
{\al_n}(t)= \left\{
\begin{array}{lr}
\al(t)    &       t\in\bigcup_{i=1}^n[a_i,b_i] \\
*         &       otherwise. \\

\end{array}
\right.
\end{displaymath}
If $B$ is finite, put $\al_n=\al_s$, for every $n>s$.
We claim that the $\al_n$'s are continuous. For, if $V\subseteq X/A$ is open, then\\
i) If $*\in V$, then $$\al_1^{-1}(V)=[0,a_1)\cup (b_1,1]\cup \al|_{[a_1,b_1]}^{-1}(V)$$
which by continuity of $\al$ is open in $I$.\\
ii) If $*\notin V$, then we show that $\al_1^{-1}(V)=\al|_{[a_1,b_1]}^{-1}(V)=\al|_{(a_1,b_1)}^{-1}(V)$ which guaranties $\al^{-1}(V)$ is open. For this it suffices to show that $\al_1(a_1),\al_1(b_1)\notin V$. \\
For each $n\in \mathbb{N}$, $\al(a_n),\al(b_n)\in \ov{\{*\}}$ and $\{\al(a)|\ a\in S\}\subseteq \ov{\{*\}}$. For, if $G$ is an open neighborhood of $\al(a_n)$, then $ \al^{-1}(G)$ is an open neighborhood of $a_n$, so there exists $\varepsilon>0$ such that $(a_n-\varepsilon,a_n+\varepsilon)\subseteq\al^{-1}(G)$ or equivalently $\al((a_n-\varepsilon,a_n+\varepsilon))\subseteq G$. If $*\notin G$, then $(a_n-\varepsilon,a_n+\varepsilon)\subseteq \al^{-1}(\{*\}^c)$ which is a contradiction since $[a_n,b_n]$ is a connected component of $\al^{-1}(\{*\}^c)$. Similarly $\al(b_n)\in\ov{\{*\}}$ for each $n\in\N$ and $\al(S)\sub\ov{\{*\}}$.
Thus if $\al_1(b_1)=\al(b_1)\in V$, then $V$ must meet $\{*\}$ which is a contradiction since $*\notin V$. Therefore $\al_1$ is a continuous loop such that $[\al_1]\in Im(p_*)$. Similarly, all the $\al_n$'s are continuous. Also, for every $n\in\N$, $[\al_n]$ is a product of n homotopy classes of loops which are similar to loops introduced in Step 1. This implies that $[\al_n]\in Im(p_*)$ since $Im(p_*)$ is a subgroup.

Now we show that the sequence $\{\al_n\}$ converges to $\al$. Let $\al\in \langle K,U\rangle$, where $K$ is a compact subset of $I$ and $U$ is an open subset of $X/A$, then\\
i) If $*\in U$, then for each $n\in\mathbb{N}$, $\al_n\in <K,U>$ since for each $t\in K$, $\al_n(t)=\al(t)$ or $\al_n(t)=*$ which in both cases $\al(t)\in U$.\\
ii) If $K\subseteq\al^{-1}(\{*\}^c)$ and $K\cap S\neq\varnothing$, then there exists $a\in K\cap S\subseteq\al^{-1}(U)$, so $\al(a)\in U$, but $\al(a)\in \ov{\{*\}}$ and $U$ is open. Thus $*\in U$ and by (i), for each n, $\al_n(K)\subseteq U$.\\
iii) If $K\subseteq\bigcup_{n=1}^\infty I_n$ and there exist $n_1,n_2,...,n_s$ such that $K\subseteq\bigcup_{i=1}^sI_{n_i}$, then by definition of the $\al_n$'s, for each $n\geq max\{n_1,...,n_s\}$ we have $\al_n(K)\subseteq U$.\\
vi) If $K\subseteq\bigcup_{n=1}^\infty I_n$ and there exists an infinite subsequence $\{I_{n_r}\}$ such that $K\cap I_{n_r}\neq\varnothing$, then there is a sequence $\{x_{n_r}|x_{n_r}\in K\cap I_{n_r}\}$ such that it has a subsequence $\{x_{n_{r_s}}\}$ converges to an element of $K$, $b$ say, by compactness of $K$. Since $b\in K\subseteq\al^{-1}(U)$, there exists $\varepsilon>0$ such that $(b-\varepsilon,b+\varepsilon)\subseteq\al^{-1}(U)$. Also, there exists $s_0$ such that for each $s\geq s_0$ , $x_{n_{r_s}}\in (b-\varepsilon /2,b+\varepsilon /2)$ since $x_{n_{r_s}}\lo b$. Since $diam(I_n)\lo 0$, there is $n_0$ such that for each $n\geq n_0$ , $diam(I_n)<\varepsilon /2$. Choose $s_1\in N$ such that $s_1\geq s_0$ and $n_{r_{s_1}}\geq n_0$, then $x_{n_{r_{s_1}}}\in (b-\varepsilon /2,b+\varepsilon /2)$. Also $x_{n_{r_{s_1}}}\in K\cap I_{n_{r_{s_1}}}$ and $diam(I_{n_{r_{s_1}}})<\varepsilon /2$, so $a_{n_{r_{s_1}}}\in I_{n_{r_{s_1}}}\subseteq (b-\varepsilon ,b+\varepsilon )\subseteq \al^{-1}(U)$ which implies that $\al(a_{n_{r_{s_1}}})\in U$ and therefore $*\in U$ since $\al(a_{n_{r_{s_1}}})\in\pa(\{*\})$. Using the last (i) we have $\al_n(K)\subseteq U$, for each $n\in\N$.\\
\end{proof}
%----------------------------------------------------------------------------------------------------
\begin{definition}Let $X$ be a topological space and $A_1,A_2, . . . ,A_n$ be any subsets of $X$, $n\in\N$. By the quotient space $X/(A_1, . . . ,A_n)$ we mean the quotient space obtained from $X$ by identifying each of the sets $A_i$ to a point. Also, we denote the associated quotient map by $p:X\lo X/(A_1,A_2,...,A_n)$.
\end{definition}
\begin{corollary}
If $A_1,A_2$ are open subsets of a path connected space $X$ such that $\ov{A_1},\ov{A_2}$ are path connected. Then for every $a\in A_1\cup A_2$ the following equality holds: $$\ov{p_*\pi_1^{qtop}(X,a)}=\pi_1^{qtop}(X/(A_1,A_2),*).$$
\end{corollary}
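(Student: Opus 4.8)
The plan is to realize $p$ as a composite of two single–subspace quotient maps and to invoke Theorem 3.2 on each factor, using that a composite of continuous homomorphisms with dense image again has dense image. Write $p_1:X\lo X/A_1$ for the map collapsing $A_1$, set $A_2':=p_1(A_2)\sub X/A_1$, and let $p_2:X/A_1\lo (X/A_1)/A_2'$ collapse $A_2'$. Since identifying $A_1$ and then $A_2'$ identifies exactly the classes of $A_1$ and of $A_2$, one has $(X/A_1)/A_2'=X/(A_1,A_2)$ and $p=p_2\circ p_1$; by functoriality of $\pi_1^{top}$ (Theorem 2.2) this gives $p_*=(p_2)_*\circ (p_1)_*$. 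The elementary fact that $g(\ov{S})\sub\ov{g(S)}$ for continuous $g$ shows that if $(p_1)_*$ and $(p_2)_*$ both have dense image then so does $p_*$, so it suffices to treat the two factors separately.

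I would first dispose of the case $A_1\cap A_2\neq\varnothing$ at once. Here any point of $A_1$ and any point of $A_2$ are identified through a common point, so the relation collapsing $A_1$ and $A_2$ collapses all of $A_1\cup A_2$ to a single point; that is, $X/(A_1,A_2)=X/(A_1\cup A_2)$. Now $A_1\cup A_2$ is open, and $\ov{A_1\cup A_2}=\ov{A_1}\cup\ov{A_2}$ is path connected because $\ov{A_1}$ and $\ov{A_2}$ are path connected and meet (they contain the nonempty $A_1\cap A_2$). Hence Theorem 3.2 applies directly to $A:=A_1\cup A_2$ and any $a\in A_1\cup A_2$, giving the conclusion.

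It remains to treat $A_1\cap A_2=\varnothing$ through the factorization. For $p_1$, Theorem 3.2 gives that $(p_1)_*$ has dense image, and for $p_2$ I must verify its hypotheses on $A_2'$. Openness is immediate: since $A_1$ is open and disjoint from $A_2$, one has $p_1^{-1}(A_2')=A_2$, which is open, so $A_2'$ is open. For path connectedness of $\ov{A_2'}$, note that $A_1$ open and $A_1\cap A_2=\varnothing$ force $\ov{A_2}\cap A_1=\varnothing$ (a point of $A_1\cap\ov{A_2}$ would make the neighborhood $A_1$ meet $A_2$); hence $\ov{A_2}$ lies in the closed set $X-A_1$, on which $p_1$ restricts to the homeomorphism $q_1:X-A_1\lo (X/A_1)-\{*_1\}$, $*_1=p_1(A_1)$, onto a closed subspace. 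Consequently $p_1(\ov{A_2})$ is closed in $X/A_1$ and path connected, and one checks $\ov{A_2'}=p_1(\ov{A_2})$; so $\ov{A_2'}$ is path connected and Theorem 3.2 applies to $p_2$.

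The remaining, and most delicate, point is the basepoint. Theorem 3.2 requires the basepoint of its domain to lie in the collapsed set, yet in the composite the domain of $(p_2)_*$ is based at $p_1(a)$, which lies in $A_2'$ exactly when $a\in A_2$; when $a\in A_1$ one has $p_1(a)=*_1\notin A_2'$. I would therefore first upgrade Theorem 3.2 to a basepoint-free statement: for a path connected space $Z$ and an open $B$ with $\ov{B}$ path connected, $Im((p_B)_*)$ is dense in $\pi_1^{top}(Z/B,y)$ for every basepoint $y$. This follows from Theorem 3.2 at a basepoint $z_0\in B$ together with the change-of-basepoint isomorphisms along a path and its image, which for $\pi_1^{top}$ are homeomorphisms of quasitopological groups; the naturality square for $p_B$ then transports density from $z_0$ to $y$. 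Applying this basepoint-free form to whichever factor has its basepoint off the collapsed set, and the original Theorem 3.2 to the other, makes both $(p_1)_*$ and $(p_2)_*$ dense, whence $p_*$ is dense. I expect the verification that change of basepoint is a homeomorphism, so that density is genuinely preserved, to be the main technical obstacle.
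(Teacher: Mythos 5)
Your proposal is correct and takes essentially the same route as the paper: factor $p=p_2\circ p_1$ through $X/A_1$, apply Theorem 3.2 to each factor, and repair the basepoint mismatch using path-conjugation homeomorphisms of $\pi_1^{top}$, concluding density of the composite from continuity. The only differences are organizational: the paper inlines the two conjugation homeomorphisms into the composite and checks $\vf_2\circ(p_2)_*\circ\vf_1\circ(p_1)_*=p_*$ rather than stating a basepoint-free upgrade of Theorem 3.2, while your explicit verifications (that $p_1(A_2)$ is open with path connected closure, and that the non-disjoint case reduces to collapsing $A_1\cup A_2$) are details the paper merely asserts.
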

\begin{proof}
We can assume that the $A_i$'s are disjoint. If they are not disjoint, the result follows from Theorem 3.2. Let $p_1:X\lo X/A_1$, $p_2:X/A_1\lo X/(A_1,A_2)$ be associated quotient maps and $a_1=a\in A_1$. By Theorem 3.2, $\ov{(p_1)_*\pi_1^{qtop}(X,a_1)}=\pi_1^{qtop}(X/A_1,*_1)$, where $*_1=p_1(a_1)$. Since $X$ is path connected, so is $X/A_1$. Also, $p_1(A_2)$ is an open subset of $X/A_1$ and the closure of $p_1(A_2)$ in $X/A_1$ is path connected. Let $a_2\in p_1(A_2)$, then $\ov{(p_2)_*\pi_1^{qtop}(X/A_1,a_2)}=\pi_1^{qtop}(X/(A_1,A_2),*_2)$, where $*_2=p_2(a_2)$. Since $X/A_1$ is path connected, there exists a homeomorphism $\vf_1:\pi_1^{qtop}(X/A_1,*_1)\lo\pi_1^{qtop}(X/A_1,a_2)$ by $\vf_1([\al])=[\gamma*\al*\gamma^{-1}]$, where $\gamma$ is a path from $a_2$ to $*_1$. We have $\ov{(p_2)_*\circ\vf_1\circ (p_1)_*(\pi_1^{qtop}(X,a))}\supseteq ((p_2)_*\circ\vf_1)(\ov{(p_1)_*(\pi_1^{qtop}(X,a))})=((p_2)_*\circ\vf_1)(\pi_1^{qtop}(X/A_1,*_1))=(p_2)_*\pi_1^{qtop}(X/A_1,a_2)=Im(p_2)_*$ which implies that $Im(p_2)_*\circ\vf_1\circ(p_1)_*$ is dense in $\pi_1(X/(A_1,A_2),*_2)$. If $\gamma'=p_2\circ\gamma$, then $\vf_2:\pi_1^{qtop}(X/(A_1,A_2),*_2)\lo\pi_1^{qtop}(X/(A_1,A_2),*_1)$ by $\vf_2([\al])=[\gamma'^{-1}*\al*\gamma']$ is a homeomorphism. Hence $Im(\vf_2\circ(p_2)_*\circ\vf_1\circ (p_1)_*)$ is dense in $\pi_1^{qtop}(X/(A_1,A_2),*_1)$. Moreover $\vf_2\circ(p_2)_*\circ\vf_1\circ (p_1)_*=p_*$ which implies that $Im(p_*)$ is dense in $\pi_1^{qtop}(X/(A_1,A_2),*)$, as desired.
\end{proof}

By induction and Corollary 3.4, we have the following results.
\begin{corollary}
Let $A_1,A_2,...,A_n$ be open subsets of a path connected space $X$ such that the $\ov{A_i}$'s are path connected for each $i=1,2,...,n$. Then for any $a\in\bigcup_{i=1}^nA_i$ the following equality holds:
 $$\ov{p_*\pi_1^{qtop}(X,a)}=\pi_1^{qtop}(X/(A_1,A_2,...,A_n),*).$$
\end{corollary}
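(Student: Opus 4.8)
The plan is to induct on $n$, taking Theorem 3.2 as the base case $n=1$ and Corollary 3.4 as the case $n=2$, and to carry out the inductive step by collapsing the sets one at a time exactly as in the proof of Corollary 3.4. So suppose the assertion holds for any family of $n-1$ open sets with path connected closures, and let $A_1,\ldots,A_n$ be as in the statement. As a first move I would reduce to the case in which the $A_i$ are pairwise disjoint: if $A_i\cap A_j\neq\varnothing$ for some $i\neq j$, then the points to which $A_i$ and $A_j$ collapse become identified, so $X/(A_1,\ldots,A_n)=X/(\ldots,A_i\cup A_j,\ldots)$, where $A_i\cup A_j$ is again open with path connected closure $\ov{A_i}\cup\ov{A_j}$. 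This strictly lowers the number of collapsed sets, so after finitely many such steps either the sets are pairwise disjoint or their number has dropped below $n$, in which case the inductive hypothesis applies directly.

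Assuming now that the $A_i$ are pairwise disjoint, I would factor the quotient. Put $Y:=X/(A_1,\ldots,A_{n-1})$ with quotient map $p_{n-1}:X\lo Y$, and set $A_n':=p_{n-1}(A_n)$. Since $X$ is path connected so is $Y$; because the $A_i$ are disjoint and open, $p_{n-1}^{-1}(A_n')=A_n$ is open and hence $A_n'$ is open in $Y$, while $\ov{A_n'}$ is path connected, just as $p_1(A_2)$ was seen to have path connected closure in the proof of Corollary 3.4. Thus $(Y,A_n')$ satisfies the hypotheses of Theorem 3.2, and $X/(A_1,\ldots,A_n)=Y/A_n'$ with the quotient map factoring as $p=q\circ p_{n-1}$, where $q:Y\lo Y/A_n'$.

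Finally I would combine the two density statements. By the inductive hypothesis $(p_{n-1})_*$ has dense image in $\pi_1^{top}(Y)$, and by Theorem 3.2 applied to $(Y,A_n')$ the map $q_*$ has dense image in $\pi_1^{top}(Y/A_n')$. The elementary fact that a continuous map carries a dense set into a dense subset of the closure of its image, namely $f(\ov S)\sub\ov{f(S)}$, applied to $q_*$ and $S=(p_{n-1})_*\pi_1^{top}(X,a)$, yields
$$\ov{q_*(p_{n-1})_*\pi_1^{top}(X,a)}\supseteq\ov{q_*\,\pi_1^{top}(Y)}=\pi_1^{top}(Y/A_n'),$$
so that $\ov{Im(p_*)}=\pi_1^{top}(X/(A_1,\ldots,A_n),*)$. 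The various base points produced at each collapse are matched, exactly as in Corollary 3.4, by the change-of-basepoint conjugations $\vf([\al])=[\gamma*\al*\gamma^{-1}]$, which are homeomorphisms of the topological fundamental group because the intermediate spaces are path connected and therefore preserve density.

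I expect the only delicate point to be the geometric bookkeeping of the middle step: verifying that the image $A_n'$ of $A_n$ in the intermediate quotient $Y$ is once more open with path connected closure, so that Theorem 3.2 may legitimately be invoked, and checking that the non-disjoint reduction indeed terminates. Once this is settled, the density conclusion is formal, following from continuity together with the basepoint homeomorphisms precisely as in the proof of Corollary 3.4.
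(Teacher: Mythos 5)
Your proposal is correct and follows essentially the same route as the paper, whose proof is precisely ``by induction and Corollary 3.4'': you reduce to the pairwise disjoint case by merging intersecting sets, factor the quotient through $Y=X/(A_1,\ldots,A_{n-1})$, verify that $p_{n-1}(A_n)$ is again open with path connected closure so that Theorem 3.2 applies, and transfer density via continuity ($f(\ov S)\sub\ov{f(S)}$) together with the change-of-basepoint homeomorphisms, exactly as in the two-set case. The details you flag as delicate (openness and path connected closure of $A_n'$, using $\ov{A_n}\cap A_i=\varnothing$ in the disjoint case) are the same ones the paper itself leaves implicit, so nothing is missing relative to the paper's own standard of rigor.
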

\begin{corollary}
Let $A_1,A_2,...,A_n$ be open subsets of a connected, locally path connected space $X$ such that the $\ov{A_i}$'s are path connected for every $i=1,2,...,n$. If $X/(A_1,A_2,...,A_n)$ is semi-locally simply connected, then for each $a\in\bigcup_{i=1}^nA_i$, $p_*:\pi_1^{qtop}(X,a)\lo\pi_1^{qtop}(X/(A_1,A_2,...,A_n,*)$ is an epimorphism.
\end{corollary}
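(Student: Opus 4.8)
The plan is to combine the density statement of Corollary 3.5 with a discreteness argument, so that density forces surjectivity. First I would note that a connected, locally path connected space is path connected, so the hypotheses of Corollary 3.5 are met and we obtain immediately that $Im(p_*)$ is dense in $\pi_1^{top}(X/(A_1,\dots,A_n),*)$. The whole point is then to show that this topological fundamental group is discrete, because a dense subgroup of a discrete topological group must be the entire group.

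To apply Theorem 2.3 I need the quotient space $Y:=X/(A_1,\dots,A_n)$ to be locally path connected. This is the one genuine ingredient, and it is the standard fact that the quotient of a locally path connected space is locally path connected. I would prove it directly by verifying that path components of open sets are open. Given an open set $U\subseteq Y$ and a path component $C$ of $U$, take any $x\in p^{-1}(C)$; since $p^{-1}(U)$ is open and $X$ is locally path connected, there is a path connected open neighborhood $V$ of $x$ with $V\subseteq p^{-1}(U)$. Then $p(V)$ is a path connected subset of $U$ meeting $C$, whence $p(V)\subseteq C$ and so $V\subseteq p^{-1}(C)$. Thus $p^{-1}(C)$ is open, and because $p$ is a quotient map, $C$ is open in $Y$.

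With $Y$ locally path connected and semi-locally simply connected by hypothesis, Theorem 2.3 gives that $\pi_1^{top}(Y,*)$ is discrete. In a discrete space every singleton is open, so the only dense subset is the whole space; since $Im(p_*)$ is dense by the first step, we conclude $Im(p_*)=\pi_1^{top}(Y,*)$, i.e. $p_*$ is an epimorphism.

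I do not anticipate a serious obstacle. The only nonformal step is the preservation of local path connectedness under the quotient map, and that argument is routine. Conceptually, the semi-local simple connectivity hypothesis is present precisely to upgrade the dense image of Corollary 3.5 to a full image, and the mechanism is exactly the elementary observation that a dense subgroup of a discrete group is everything.
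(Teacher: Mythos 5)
Your proof is correct, and its skeleton is exactly the paper's: density of $Im(p_*)$ from Corollary 3.5, discreteness of $\pi_1^{top}(X/(A_1,\dots,A_n),*)$ from Theorem 2.3, and then the trivial observation that a dense subset of a discrete space is the whole space. Where you genuinely differ is in the only substantive step, namely local path connectedness of the quotient. The paper proves this by hand for $X/A_1$, splitting into cases according to whether the point lies off $\ov{\{*\}}$, equals $*$, or lies in $\pa(\{*\})$; this uses the openness of $A_1$ (so that $\{*\}$ is open in $X/A_1$, and so that $p^{-1}(p(\wt{U}))=\wt{U}\cup A_1$ for neighborhoods meeting $A_1$), and then the argument is iterated through the successive quotients $X/(A_1,\dots,A_i)$. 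You instead invoke the general fact that \emph{any} quotient of a locally path connected space is locally path connected, proved via the characterization that path components of open sets are open: your verification that $p^{-1}(C)$ is open is correct, never uses that the $A_i$ are open or that their closures are path connected, and handles $X/(A_1,\dots,A_n)$ in one stroke rather than by induction. So your route is cleaner and strictly more general; the paper's case analysis buys only the explicit description of path connected neighborhoods, which is not needed for the corollary. One small point worth making explicit before you apply Theorem 2.3: the quotient is connected, being a continuous image of the connected space $X$, and hence path connected once local path connectedness is established, since Theorem 2.3 is stated for path connected spaces.
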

\begin{proof}
Let $U$ be an open neighborhood of $\bar x\in (X/A_1)\setminus\ov{\{*\}}$. Since $X$ is locally path connected, there is a path connected open neighborhood $\wt{U}\sub p^{-1}(U)$ of $x=q^{-1}(\bar x)$ such that $\wt{U}\cap A_1=\varnothing$. Then $V:=p(\wt{U})=q(\wt{U})\sub U$ is a path connected open neighborhood of $\bar x$. $X/A_1$ is locally path connected at $*$ since $\{*\}$ is an open subset of $X/A_1$. Let $U$ be an open neighborhood of $\bar x\in \pa(\{*\})$, then there exists a path connected open neighborhood $\wt{U}\sub p^{-1}(U)$ of $x$. Since $p^{-1}(p(\wt {U}))=\wt{U}\cup A_1$, $p(\wt{U})$ is a path connected open neighborhood of $\bar x$ in $U$. Therefore $X/A_1$ is locally path connected. Similarly $X/(A_1,A_2,...,A_n)$ is connected, locally path connected. Since $X/(A_1,A_2,...,A_n)$ is a connected, semi-locally simply connected and locally path connected space, by Theorem 2.3, $\pi_1^{qtop}(X/(A_1,A_2,...,A_n),*)$ is a discrete topological group which implies that $Im(p_*)=\pi_1^{qtop}(X/(A_1,A_2,...,A_n),*)$ by Corollary 3.5.
\end{proof}
In the following example we show that with the assumptions of Theorem 3.2, $p_*$ is not necessarily onto.
\begin{example}
Let $A_n=\{1/(2n-1),1/2n\}\times [0,1+1/2n]\bigcup [1/2n,1/2n-1]\times\{1+1/2n\}$ for each $n\in\N$. Consider $X=(\bigcup_{n\in\N}A_n)\bigcup\{0\}\times [0,1]\bigcup[0,1]\times\{0\}$ with $a=(0,0)$ as the base point and $A=\{(x,y)\in X\ |\ y<1\}$ (see Figure 1). $A$ is an open subset of $X$ with path connected closure. Assume $I_n=(1/2+1/2(n+1),1/2+1/2n]$ and $f_n$ be a homeomorphism from $I_n$ to $A_n-\{(1/2n,0)\}$ for every $n\in\mathbb{N}$. Define $f:I\lo X$ by
 \begin{displaymath}
{f}(t)= \left\{
\begin{array}{lr}
the\ point\ (0,2t)    &       t\in [0,1/2], \\
f_n(t)       &       t\in I_n. \\

\end{array}
\right.
\end{displaymath}
We claim that $\al=p\circ f$ is a loop in $X/A$ at $*$. It suffices to show that $\al$ is continuous on $t=1/2$ and boundary points of $I_n$'s since $f$ is continuous on $[0,1/2)$ and by gluing lemma on $\bigcup int(I_n)$. Since $\al$ is locally constant at $t=1/2+1/2n$ for each $n\in\mathbb{N}$, $\al$ is continuous at boundary points of $I_n$. For each open neighborhood $G$ of $f(1/2)=(0,1)$ in $X$, there exists $n_0\in\mathbb{N}$ such that $G$ contains $A_n\bigcap A^c$ for $n>n_0$. Therefore continuity at $t=1/2$ follows from $\al(1/2)\in \overline{\{*\}}$. Now let $B\sub\mathbb{N}$ and define
\begin{displaymath}
{g_B}(t)= \left\{
\begin{array}{lr}
(p\circ f)(t)    &       t\in\bigcup_{m\in B}I_m, \\
*       &       otherwise. \\

\end{array}
\right.
\end{displaymath}
Then $g_B$ is continuous and for $B_1,B_2\sub\mathbb{N}$ such that $B_1\neq B_2$, $[g_{B_1}]\neq[g_{B_2}]$ which implies that $\pi_1(X/A,*)$ is uncountable. But by compactness of $I$, a given path in $X$ can traverse finitely many of the $A_n$'s and therefore $\pi_1(X,a)$ is a free group on countably many generators which implies that $p$ does not induce a surjection of fundamental groups.
\end{example}
\begin{figure}
 \includegraphics[scale=0.4]{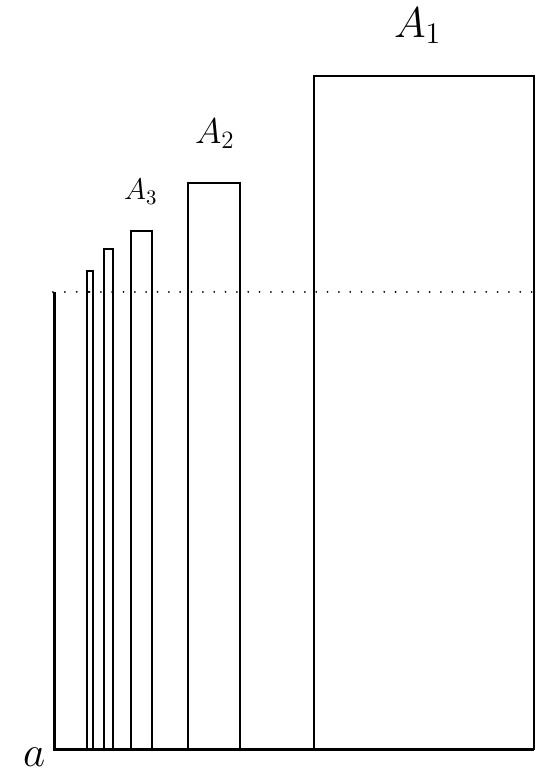}
  \caption{}\label{1}
\end{figure}

%---------------------------------------------------------------------------------------------------
Let $(X,x)$ be a pointed topological space such that $\{x\}$ is closed. If $\al: [0, 1]\lo X$ is a loop in $X$ based at $x$,
 then $\al^{-1}(\{x\})$ is a closed subset of $[0,1]$. Its complement, $\al^{-1}(\{x\}^c)$ is therefore the union of a countable collection of
disjoint open intervals. We denote this collection of intervals by $W_{\al}$.
%--------------------------------------------------------------------------------------------------------------------------------
\begin{definition}
Let $(X,x)$ be a pointed topological space. A loop $\al$ in $X$ based at $x$ is called {\bf semi-simple} if $W_{\al}=\{(0,1)\}$ and is called {\bf geometrically simple} if $W_{\al}$ has one element. If $W_{\al}$ is finite, then the loop $\al$ is called {\bf geometrically finite} \cite{MM}.
\end{definition}
%=-------------------------------------------------------------------------------------------------------------------
\begin{lemma}
Every geometrically simple loop is homotopic to a semi-simple loop.
\end{lemma}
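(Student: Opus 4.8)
The plan is to reduce a geometrically simple loop to a semi-simple one by throwing away the constant stretches at the two ends and linearly rescaling what is left. Suppose $\al$ is geometrically simple, so $W_\al=\{(a,b)\}$ for a single open interval with $0\leq a<b\leq 1$. By the very definition of $W_\al$ this means $\al^{-1}(\{x\}^c)=(a,b)$, i.e. $\al(t)\neq x$ precisely for $t\in(a,b)$, while $\al(t)=x$ for all $t\in[0,a]\cup[b,1]$.

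First I would produce the candidate semi-simple loop $\bt:I\lo X$ by $\bt(s)=\al\bigl(a+s(b-a)\bigr)$, that is, the restriction $\al|_{[a,b]}$ reparametrized linearly onto $I$. Since $b-a>0$, the map $s\mapsto a+s(b-a)$ is a homeomorphism $I\lo[a,b]$, so $\bt$ is continuous, and $\bt(0)=\al(a)=x=\al(b)=\bt(1)$, so $\bt$ is a loop based at $x$. Moreover $a+s(b-a)\in(a,b)$ exactly when $s\in(0,1)$, so $\bt(s)\neq x$ for every $s\in(0,1)$; hence $\bt^{-1}(\{x\}^c)=(0,1)$, giving $W_\bt=\{(0,1)\}$, i.e. $\bt$ is semi-simple.

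It then remains to check that $\al\simeq\bt$ relative to the endpoints. I would introduce the continuous surjection $h:I\lo I$ defined by $h(t)=0$ on $[0,a]$, $h(t)=(t-a)/(b-a)$ on $[a,b]$, and $h(t)=1$ on $[b,1]$; it is continuous by the gluing lemma and satisfies $h(0)=0$, $h(1)=1$. A direct check on the three subintervals gives the key identity $\al=\bt\circ h$: on $[0,a]$ and on $[b,1]$ both sides are the constant $x$, while on $[a,b]$ one has $\bt(h(t))=\al\bigl(a+(t-a)\bigr)=\al(t)$.

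Finally, because $I$ is convex and $h$ agrees with $\mathrm{id}_I$ at $0$ and $1$, the straight-line homotopy $H(t,s)=(1-s)h(t)+s\,t$ takes values in $I$, runs from $h$ to $\mathrm{id}_I$, and fixes $t=0$ and $t=1$. Composing with $\bt$, the map $\bt\circ H$ is then a homotopy in $X$ from $\bt\circ h=\al$ to $\bt\circ\mathrm{id}_I=\bt$ that keeps both endpoints at $x$, so $\al\simeq\bt$ relative to endpoints, as desired. I do not expect a genuine obstacle here: the whole content is the reparametrization identity $\al=\bt\circ h$ together with the convexity of $I$, and the only points deserving a word of care are the degenerate cases $a=0$ or $b=1$, where one (or both) of the constant end-pieces is empty but the same formulas for $\bt$ and $h$ apply without change.
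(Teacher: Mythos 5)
Your proposal is correct and follows essentially the same route as the paper: both take the restriction $\al|_{[a,b]}$ reparametrized by a linear homeomorphism $I\lo[a,b]$ as the semi-simple loop. The only difference is that the paper merely asserts the homotopy $\al\simeq\bt$, whereas you spell out the standard reparametrization argument (the identity $\al=\bt\circ h$ plus the straight-line homotopy from $h$ to $\mathrm{id}_I$), which is a welcome but routine addition.
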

\begin{proof}
Let $\al$ be a geometrically simple loop at $x\in X$. Then there are $r,s\in [0,1]$ such that $\al^{-1}(\{x\}^c)=(r,s)$ and $\al(r)=\al(s)=x$. Let $\bt:=\al|_{[r,s]}$ and $\vf:[0,1]\lo [r,s]$ be a linear homeomorphism, then $\bt\circ\vf$ is a semi-simple loop at $x$ and $\al\sq\bt\circ\vf$.
\end{proof}
%-------------------------------------------------------------------------------------------------------------------------------
In the sequel, for a semi-simple loop $\al:I\lo X/A$ denote $\wt\al=q^{-1}\circ{\al|_{(0,1)}}:(0,1)\lo (X-A)$, where $A$ is a closed subset of a topological space $X$.
\begin{lemma}
 Let $A\subseteq X$ be a closed subset of $X$ and $\al$ be a semi-simple loop at $*$ in $X/A$. If $lim_{t\rightarrow 0}\wt\al(t)$ and $lim_{t\rightarrow 1}\wt\al(t)$ do not exist, then for each $t_0\in (0,1)$, there are  $b_0,b_1\in A$ such that $b_0$ is a limit point of $\wt\al((0,t_0))$ and $b_1$ is a limit point of $\wt\al((t_0,1))$.
\end{lemma}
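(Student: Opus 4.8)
The plan is to prove, for a fixed $t_0\in(0,1)$, the existence of $b_0$; the point $b_1$ will be obtained by the symmetric argument at the endpoint $1$. Write $S:=\wt\al((0,t_0))\sub X\setminus A$ and take all closures in $X$. Since $S$ is disjoint from $A$, any point of $A$ that lies in $\ov{S}$ is automatically an accumulation point of $S$; so it suffices to prove $A\cap\ov{S}\neq\varnothing$, and then set $b_0$ to be any point of this intersection.

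The first key step is to observe that $\wt\al$ is forced into every neighbourhood of $A$. Let $U$ be any open subset of $X$ with $A\sub U$. Because the only fibre of $p$ that is not a singleton is $A=p^{-1}(\{*\})$, such a $U$ is automatically $p$-saturated, so $p(U)$ is an \emph{open} neighbourhood of $*$ in $X/A$. Since $\al$ is continuous with $\al(0)=*$, there is some $\delta\in(0,t_0)$ with $\al((0,\delta))\sub p(U)$; and since $\al$ is semi-simple, $\al(t)\neq*$ for $t\in(0,\delta)$, whence $\al(t)\in p(U)\setminus\{*\}=q(U\setminus A)$ and therefore $\wt\al(t)=q^{-1}(\al(t))\in U\setminus A\sub U$. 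In particular $U\cap S\neq\varnothing$: every open neighbourhood of $A$ meets $S$.

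The second step converts this into a genuine limit point, and this is where some care is needed, since no separation or compactness hypothesis is available and one cannot simply extract a convergent subsequence of $\wt\al(t_n)$ with $t_n\to 0$. Instead I would argue by contradiction on the covering side: if no point of $A$ were a limit point of $S$, then each $a\in A$ would have an open neighbourhood $N_a$ with $N_a\cap S=\varnothing$ (using $a\notin S$), and $U:=\bigcup_{a\in A}N_a$ would be an open neighbourhood of $A$ disjoint from $S$, contradicting the previous step. Hence some $b_0\in A$ lies in $\ov{S}$ and is the required limit point of $\wt\al((0,t_0))$. Running the identical argument at the endpoint $1$, with continuity of $\al$ there and the interval $(t_0,1)$, produces $b_1\in A$ as a limit point of $\wt\al((t_0,1))$.

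The main obstacle is exactly this second step: turning the soft fact ``$\wt\al$ enters every neighbourhood of $A$'' into the existence of an actual accumulation point of $\wt\al$ on $A$. The saturation remark together with the neighbourhood-cover contradiction settles it in full generality, which is cleaner than any sequential argument. The hypothesis that the one-sided limits $\lim_{t\to 0}\wt\al(t)$ and $\lim_{t\to 1}\wt\al(t)$ fail to exist is what prevents $\wt\al$ from extending continuously across the endpoints, so that the clustering on $A$ is the genuine content of the statement; the argument itself, however, uses only the continuity of $\al$ at $0$ and $1$ and the closedness of $A$ (which is what makes $\{*\}$ closed and $q$ a homeomorphism onto the open set $X/A\setminus\{*\}$).
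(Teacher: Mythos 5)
Your proof is correct and is essentially the paper's own argument: the paper likewise assumes each $b\in A$ has a neighborhood $G_b$ missing $\wt\al((0,t_0))$, forms the saturated open set $G=\bigcup_b G_b\supseteq A$, notes $p(G)$ is an open neighborhood of $*$ disjoint from $\al((0,t_0))$, and contradicts continuity of $\al$ at $0$. Your only difference is presentational — you factor out the statement ``every open neighborhood of $A$ meets $\wt\al((0,t_0))$'' before running the same covering contradiction, and you spell out the saturation and semi-simplicity details the paper leaves implicit.
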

\begin{proof}
Let $t_0\in (0,1)$ and by contrary suppose that each $b\in A$ has an open neighborhood $G_b$ such that $G_b\cap \wt\al((0,t_0))=\varnothing$. Then $G=\bigcup_{b\in A} G_b$ is an open neighborhood of $A$ and so $p(G)$ is an open neighborhood of $*$ (since $p^{-1}(p(G))=G$) such that does not intersect $\al((0,t_0))$ which is a contradiction to continuity of $\al$.
\end{proof}
%-------------------------------------------------------------------------------------------------
\begin{theorem}
If $A$ is a closed path connected subset of a locally path connected space $X$ such that every point of $A$ has a countable local base in $X$, then for each $a\in A$ we have $$\overline{p_*\pi_1^{qtop}(X,a)}=\pi_1^{qtop}(X/A,*).$$
\end{theorem}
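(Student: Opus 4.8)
The plan is to follow the two-step pattern of Theorem 3.2, now using that $A$ closed forces $\{*\}$ to be closed in $X/A$, so that for any loop $\al:(I,\pa I)\lo(X/A,*)$ the set $\al^{-1}(\{*\}^c)$ is \emph{open} and therefore equals the disjoint union of the countable family $W_\al$. First I would reduce everything to a single lifting claim: \emph{every geometrically simple loop in $X/A$ based at $*$ lies in $Im(p_*)$}. Granting this, I order the intervals of $W_\al$ by decreasing length --- the same counting used in Theorem 3.2 (at most $n$ intervals of length $\geq 1/n$ fit in $I$) shows $W_\al$ is countable --- and set $\al_n$ equal to $\al$ on the $n$ longest intervals and to $*$ elsewhere. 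Each $\al_n$ is continuous by the same open-set verification as in Theorem 3.2, using that $\al$ carries every interval endpoint to $*$; and since $\al_n$ is, up to reparametrisation, a finite concatenation of geometrically simple loops separated by constant arcs, the lifting claim together with the subgroup property gives $[\al_n]\in Im(p_*)$.

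Second, I would show $\al_n\lo\al$ in the loop space $\Omega(X/A)$. Because $\pi_1^{top}(X/A,*)$ carries the quotient topology of $\Omega(X/A)$, the identification map is continuous, so $[\al_n]\lo[\al]$ and hence $[\al]\in\ov{Im(p_*)}$, which is the asserted density. The convergence is verified on the subbasic sets $\langle K,U\rangle$ exactly as in Step Two of Theorem 3.2: when $*\in U$ every $\al_n$ already lands in $\langle K,U\rangle$, and when $K$ meets infinitely many short intervals one uses $\mathrm{diam}(I_n)\lo 0$ together with $\al(a_i)=\al(b_i)=*$ to force $*\in U$ and reduce to the previous case.

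It remains to establish the lifting claim, which is the heart of the matter and where the three hypotheses on $A$ and $X$ are all consumed. By Lemma 3.9 it suffices to treat a semi-simple loop $\bt$, with $\wt\bt=q^{-1}\circ\bt|_{(0,1)}:(0,1)\lo X-A$. If both one-sided limits $\lim_{t\to 0}\wt\bt(t)$ and $\lim_{t\to 1}\wt\bt(t)$ exist, then from $p(\wt\bt(t))=\bt(t)\lo *$ and the closedness of $A$ these limits are points $b_0,b_1\in A$; extending $\wt\bt$ by $b_0,b_1$ produces a path in $X$, and joining its ends to $a$ by paths in the path-connected set $A$ yields a loop $\hat\bt$ at $a$ with $p\circ\hat\bt\sq\bt$, the $A$-arcs collapsing to the constant $*$ since $p(A)=\{*\}$. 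Thus $[\bt]\in Im(p_*)$ in this case.

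The hard part, and the step I expect to be the main obstacle, is when a one-sided limit --- say at $0$ --- fails to exist; this is exactly why Lemma 3.10 and the countable-local-base hypothesis are introduced. Lemma 3.10 furnishes a point $b_0\in A$ clustering $\wt\bt$ near $0$, and local path connectedness lets me fix a decreasing base $U_1\supseteq U_2\supseteq\cdots$ of \emph{path-connected} open neighborhoods of $b_0$. The intended construction replaces $\bt|_{[0,t_0]}$, up to homotopy rel $\{0,t_0\}$, by the projection of a path in $X$ that genuinely converges to $b_0$: one selects parameters $t_0>t_1>\cdots\to 0$ with $\wt\bt(t_k)\in U_k$, connects the consecutive values $\wt\bt(t_{k+1}),\wt\bt(t_k)$ by arcs inside $U_k$, and extends by $\hat\bt(0)=b_0\in A$; the symmetric construction at $1$ and closing through $A$ as before then produces the desired $\hat\bt$. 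Two points must be controlled carefully, and I expect them to carry the real difficulty: (i) upgrading the single cluster point of Lemma 3.10 to an honest sequential limit as $t\to0$, so that the $t_k$ can indeed be chosen decreasing to $0$ with $\wt\bt(t_k)\in U_k$, which is precisely what first countability at $b_0$ should supply; and (ii) showing that inserting the correction arcs inside the shrinking neighborhoods $U_k$ changes the projected loop only by a nullhomotopic amount, so that $p\circ\hat\bt\sq\bt$ survives. Securing continuity of the lift at the single bad endpoint, where $\wt\bt$ itself has no limit, is the crux of the theorem and is what forces both the local path connectedness of $X$ and the countable local base at each point of $A$.
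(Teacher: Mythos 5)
Your outer layers agree with the paper's proof: the reduction via Lemma 3.9 to semi-simple loops, the decomposition of a general loop into loops supported on finitely many intervals of $W_\al$, the subbasic-set verification of convergence in $\Omega(X/A)$, and the case where both one-sided limits of $\wt\bt$ exist (the paper's Case 1) are all essentially the paper's Steps One and Two. The divergence, and the genuine gap, is exactly at the point you flag as the crux. You build everything on a \emph{lifting claim}: that every geometrically simple loop lies in $Im(p_*)$, even when a one-sided limit of $\wt\bt$ fails to exist. The paper neither proves nor needs this. In the no-limit case the paper never lifts $\bt$: it picks $t_n\lo 1$ with $\wt\bt(t_n)$ in shrinking path-connected neighborhoods $G_n'$ of a cluster point $a_1\in A$ (Lemma 3.10, the countable local base, and local path connectedness), forms the loop that follows $\bt$ on $[0,t_n]$ and then returns to $*$ along the projection of a \emph{single} arc $\gamma_n\sub G_n'$ ending at $a_1$; each truncated loop visibly lifts (its lift now has limits at both ends), and these loops converge to $\bt$ in $\Omega(X/A)$. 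The conclusion is only $[\bt]\in\ov{Im(p_*)}$, which is all that density requires.

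Your step (ii) --- that replacing the oscillating tail of $\wt\bt$ by correction arcs inside the shrinking neighborhoods $U_k$ changes the projected loop only by a nullhomotopic amount --- is not a verification to be supplied; it fails in general. The difference loops (a discarded segment $\wt\bt|_{[t_{k+1},t_k]}$ followed by the reversed correction arc) do project into arbitrarily small neighborhoods of $*$, but smallness buys nothing: $X/A$ is precisely the kind of space that fails to be semi-locally simply connected at $*$ (this is the interesting case; cf.\ Examples 3.15 and 4.3), so arbitrarily small loops at $*$ can be essential. Concretely, take $A=\{0\}\times[-1,1]$ inside $X=\mathbb{R}^2\setminus\bigcup_k D_k$, where the $D_k$ are disjoint small open disks centered at $(1/k,0)$, and let $\bt$ be a semi-simple loop whose lift winds once around each $D_k$ in turn, with excursions shrinking to $(0,0)\in A$, and which approaches a point of $A$ honestly as $t\lo 1$; the hypotheses of the theorem hold. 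Collapsing the segment $A$ identifies $X/A$ with the plane minus a null sequence of disjoint disks accumulating at $*$, so the winding number about each deleted disk is a homotopy invariant of loops in $X/A$. The loop $\bt$ has winding number $1$ about every disk, whereas your $\hat\bt$, built with correction arcs chosen not to wind about the deleted disks (such a choice is permitted by your construction), gives $p\circ\hat\bt$ winding number $0$ about all but finitely many of them; hence $p\circ\hat\bt\not\simeq\bt$. Since your argument allows arbitrary arcs in $U_k$, one admissible choice that destroys the class is fatal, and choosing arcs that preserve the class is not a local matter --- it is the lifting problem itself. (Even if each difference loop were inessential, you would still face an infinite concatenation of nullhomotopic loops, which in such spaces need not be nullhomotopic.) The repair is architectural rather than technical: abandon the lifting claim and, at the bad end, truncate and return home through one small arc as the paper does, proving membership in the closure rather than in the image. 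Your point (i), by contrast, is minor and is treated equally briskly in the paper's own proof.
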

\begin{proof}
\emph{Step One}: Let $[\al]\in \pi_1(X/A,*)$, where $\al$ is a semi-simple loop in $X/A$ at $*$. \\
\emph{Case 1}: Assume $a_0=lim_{t\rightarrow 0}\wt\al(t)$ and $a_1=lim_{t\rightarrow 1}\wt\al(t)$ exist, so $a_0$,$a_1\in A$ and we can define a path $\ov\al:I\lo X$ such that $\ov\al|_{(0,1)}=\wt\al$, $\ov\al(0)=a_0$, $\ov\al(1)=a_1$. Since $A$ is path connected, there exist paths $\la_0,\la_1:I\lo A$ such that $\lambda_0$ is a path from $a$ to $a_0$ and $\lambda_1$ is a path from $a_1$ to $a$. Therefore $\la_0*\ov\al*\la_1$
is a loop at $a$ such that $p_*([\la_0*\ov\al*\la_1])=[\al]$.\\
\emph{Case 2}: If at least one of the above limits does not exist, then we make a sequence $\{[\al_n]\}_{n\in\N}$ in $Im(p_*)$ so that converges to $[\al]$.
Without lost of generality, we can assume that $a_0=lim_{t\rg 0}\wt\al(t)$ exists and $a_1\in A$ is a limit point of $\wt\al((1/2,1))$ by Lemma 3.10. We can define a continuous map $\ov\al:[0,1)\lo X$ such that $\ov\al|_{(0,1)}=\wt\al$, $\ov\al(0)=a_0$. By hypothesis, there is a countable local base $\{O_i\}_{i\in\N}$ at $a_1$. Let $\{G_i\}_{i\in\N}$ be a sequence of open neighborhoods of $a_1$ such that $G_i=O_1\cap ...\cap O_i$. Since $X$ is locally path connected and the $G_i$'s are open neighborhoods of $a_1$, there exist path connected open neighborhoods  $G_i'\sub G_i$ of $a_1$. Since the point $a_1$ is a limit point, there are $t_i\in (1/2,1)$ such that $\wt\al(t_i)\in G_i'$, $t_i<t_{i+1}$, $t_n\lo 1$ and there are paths $\gamma_i:[t_i,1]\lo G_i'$ from $\ov\al(t_i)$ to $a_1$, for all $i\in\mathbb{N}$. Since $A$ is path connected, there exist paths $\la_0,\la_1:I\lo A$ such that $\lambda_0$ is a path from $a$ to $a_0$ and $\lambda_1$ is a path from $a_1$ to $a$. Let $\ov\al_n:=\la_0*\ov\al|_{[0,t_n]}\circ\xi_n*\gamma_n\circ\zeta_n*\la_1$, where $\xi_n:[0,1]\lo [0,t_n]$ and $\zeta_n:[0,1]\lo [t_n,1]$ are increasing linear homeomorphisms. Note that every $\ov\al_n$ is a loop in $X$ at $a$ and if $\beta_n:=p\circ(\gamma_n\circ\zeta_n)$, then $$\al_n':=p\circ\ov\al_n=e_**\al|_{[0,t_n]}\circ\xi_n*\beta_n*e_*$$ is a loop in $X$ at $*$ and $p_*([\ov\al_n])=[\al_n']$. Define $\al_n:I\lo X/A$ by
 \begin{displaymath}
{\al_n}(t)= \left\{
\begin{array}{lr}
\al(t)    &       0\leq t\leq t_n \\
p\circ\gamma_n(t)       &       t_n\leq t\leq 1 \\

\end{array}
\right.
\end{displaymath}
which is a loop at $*$ and $[\al_n']=[\al_n]$. Thus it suffices to prove that $\al_n\lo\al$. If $\al\in\langle K,U\rangle$, where $K$ is a compact subset of $[0,1]$ and $U$ is an open subset of $X/A$, then\\
i) If $*\notin U$, then $K\cap \al^{-1}(*)=\varnothing$. Let $m\in\mathbb{N}$ such that $t_{m}\geq max\ K$. Since for each $t\in K$, $t\leq t_{m}$, we have $\al_n(t)=\al(t)\in U$, for all $n>m$ which implies that $\al_n(K)=\al(K)\sub U$, for each $n> m$.\\
ii) If $*\in U$, then $p^{-1}(U)$ is an open neighborhood of $A$, thus there exists $m\in\mathbb{N}$ such that $G_n'\sub p^{-1}(U)$, for each $n\geq m$. Therefore $Im (\gamma_n)\sub G_n'$ which implies that $Im( p\circ\la_n)\sub U$. Thus for all $t\in K$ and $n\geq m$ we have\\
\begin{displaymath}
{\al_n}(t)= \left\{
\begin{array}{lr}
\al(t)\in U    &       t\in [0,t_n] \\
(p\circ\gamma_n)(t)\in U         &       t\in [t_n,1]. \\

\end{array}
\right.
\end{displaymath}
Therefore for a semi-simple loop $\al$ in $X/A$ we have $[\al]\in \ov{Im(p_*)}$. Similarly, for every loop $\al$ such that $\al^{-1}(\{*\})$ is finite, $[\al]\in\ov{Im(p_*)}$ which implies that the homotopy class of every geometrically finite loop belongs to $\ov{Im(p_*)}$ by Lemma 3.9. \\
\emph{Step Two}:
If $\al$ is not geometrically finite, $W_{\al}$ is countable since every open subset of $I$ is a countable union of open intervals. Let $I_j=\ov {L_j}$ where $W_{\al}=\{L_j|j\in\N\}$ and let\\
\begin{displaymath}
{\al_j}(t)= \left\{
\begin{array}{lr}
\al(t)   &       t\in I_1\cup ...\cup I_j \\
*        &       otherwise, \\

\end{array}
\right.
\end{displaymath}
then $[\al_j]\in \ov {Im(p_*)}$ since the $\al_j$'s are geometrically finite. \\
 Since $\ov{(\ov{ Im(p_*)})}=\ov{ Im(p_*)}$, it suffices to show that $\al_j\lo \al$. For, if $\al\in\langle K,U\rangle$ for a compact subset $K$ of $[0,1]$ and an open subset $U$ of $X/A$, then\\
i) If $*\in U$, then for each $t\in K$ and $j\in\N$, $\al_j(t)$ takes value $\al(t)$ or $*$ which in both cases belongs to $U$, so $\al_j(K)\sub U$, for all $j\in\N$.\\
ii) If $*\notin U$, then $K\cap\al^{-1}(\{*\})=\varnothing$, so $K\sub\cup_j L_j$. By compactness of $K$ we have $K\sub\cup_sL_{j_s}$, for $s=1,2,...,n_K$. Let $M=max\{j_s|s=1,2,...,n_K\}$, then $\al_j(K)=\al(K)\sub U$, for each $j\geq M$.\\
\end{proof}
%-----------------------------------------------------------------------------------------------------------------------------------------------
\begin{corollary}
Let $A_1,A_2,...,A_n$ be disjoint path connected, closed subsets of a first countable, connected, locally path connected space $X$. Then for every $a\in\bigcup_{i=1}^nA_i$ the following equality holds:
 $$\ov{p_*\pi_1^{qtop}(X,a)}=\pi_1^{qtop}(X/(A_1,A_2,...,A_n),*).$$
\end{corollary}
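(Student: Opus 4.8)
The plan is to argue by induction on $n$, peeling off the \emph{last} subspace at each step and reducing to Theorem 3.11 together with a general principle about composing maps with dense image. The base case $n=1$ is exactly Theorem 3.11, noting that since $X$ is first countable every point of $A_1$ has a countable local base in $X$. For the inductive step I would fix $a\in A_1$ (relabelling the $A_i$ if necessary), write $X_k:=X/(A_1,\dots,A_k)$, and let $\pi_k:X\lo X_k$ be the canonical quotient map, so that $\pi_n=r_n\circ\pi_{n-1}$ where $r_n:X_{n-1}\lo X_n$ collapses the image $\ov A_n:=\pi_{n-1}(A_n)$ of $A_n$ to a point. The inductive hypothesis is Corollary 3.12 applied to the $n-1$ subspaces $A_1,\dots,A_{n-1}$ of $X$ itself (whose hypotheses hold verbatim), which yields that $(\pi_{n-1})_*$ has dense image; a fresh application of Theorem 3.11 to $r_n$ will give that $(r_n)_*$ has dense image. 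Observe that because the induction is on the number of sets applied to $X$, I never need the full hypotheses of Corollary 3.12 on the quotient $X_{n-1}$ — only those of Theorem 3.11.

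The engine is an elementary principle: if $f:P\lo Q$ and $g:Q\lo R$ are continuous maps each with dense image, then $g\circ f$ has dense image, because $\ov{Im(g\circ f)}=\ov{g(Im\,f)}\supseteq g(\ov{Im\,f})=g(Q)=Im\,g$, whence taking closures gives $\ov{Im(g\circ f)}\supseteq\ov{Im\,g}=R$. Consequently a finite composite of continuous dense-image maps again has dense image. All the maps I will compose — the induced homomorphisms $(\pi_{n-1})_*$ and $(r_n)_*$, continuous by functoriality into quasitopological groups, and the change-of-basepoint maps below, which are homeomorphisms — are continuous, so it suffices to exhibit $p_*=(\pi_n)_*$ as such a composite.

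Applying Theorem 3.11 to $r_n$ requires checking its hypotheses for the pair $(X_{n-1},\ov A_n)$. Since the $A_i$ are disjoint and closed, $\ov A_n$ is disjoint from the collapsed points $*_1,\dots,*_{n-1}$ and $\pi_{n-1}^{-1}(\ov A_n)=A_n$, so $\ov A_n$ is closed in $X_{n-1}$; it is path connected as a continuous image of $A_n$. The space $X_{n-1}$ is path connected (a continuous image of $X$, which is path connected being connected and locally path connected), and every point of $\ov A_n$ lies off the collapsed points, in the region where $\pi_{n-1}$ restricts to a homeomorphism of $X\setminus\bigcup_{i<n}A_i$, so it inherits a countable local base from the first countability of $X$. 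The one genuinely non-trivial point — which I expect to be the main obstacle — is that $X_{n-1}$ is \emph{locally path connected}: unlike the open case of Corollary 3.6 the collapse point is now a limit point, so I must show directly that collapsing a closed set in a locally path connected space preserves local path connectedness. At a non-collapse point this is immediate; at a collapse point $*$, given an open $U\ni *$ I would cover the collapsed set by path connected open sets $\wt W_a\sub\pi^{-1}(U)$, note that $\wt W:=\bigcup_a\wt W_a$ is saturated so that its image is open, and observe that each point of that image is joined to $*$ by the image of a path in some $\wt W_a$; this produces an open path connected neighborhood of $*$ inside $U$, and the property then propagates up the tower $X=X_0,X_1,\dots,X_{n-1}$.

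Finally I would assemble the composite, reconciling basepoints exactly as in Corollary 3.4. Theorem 3.11 gives density of $(r_n)_*$ at a basepoint $b\in\ov A_n$, while the inductive hypothesis gives density of $(\pi_{n-1})_*$ at $*_1=\pi_{n-1}(a)$; choosing a path $\gamma$ in $X_{n-1}$ from $b$ to $*_1$ and setting $\gamma'=r_n\circ\gamma$, the change-of-basepoint homeomorphisms $\vf([\al])=[\gamma*\al*\gamma^{-1}]$ and $\psi([\bt])=[\gamma'^{-1}*\bt*\gamma']$ satisfy, by the same short computation as in Corollary 3.4, the identity $(r_n)_*^{*_1}=\psi\circ(r_n)_*^{\,b}\circ\vf$, so that
\[ p_*=(\pi_n)_*=(r_n)_*^{*_1}\circ(\pi_{n-1})_*=\psi\circ(r_n)_*^{\,b}\circ\vf\circ(\pi_{n-1})_*. \]
Here $\vf,\psi$ are surjective homeomorphisms, $(\pi_{n-1})_*$ has dense image by induction, and $(r_n)_*^{\,b}$ has dense image by Theorem 3.11; since all four factors are continuous, the composition principle of the second paragraph shows that $p_*=(\pi_n)_*$ has dense image, completing the induction.
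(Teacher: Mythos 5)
Your proposal is correct and takes essentially the same approach as the paper: the paper's own proof also factors $p$ through the tower $X\to X/A_1\to\cdots\to X/(A_1,\ldots,A_n)$ and applies Theorem 3.11 to each one-set collapse $p_i:X/(A_1,\ldots,A_{i-1})\to X/(A_1,\ldots,A_i)$, asserting that each intermediate quotient is connected, locally path connected, and has countable local bases along the collapsed set. The only difference is one of detail, not of method: you spell out the dense-image composition principle, the basepoint-change homeomorphisms, and the verification that collapsing a closed set preserves local path connectedness, all of which the paper leaves implicit.
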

\begin{proof}
Let $p_i:X/(A_1,A_2,...,A_{i-1})\lo X/(A_1,A_2,...,A_i)$. Since every point of each $p_{i-1}(A_i)$ has a countable local base in the connected, locally path connected space $X/(A_1,A_2,...,A_{i-1})$, by Theorem 3.11 the result holds.
\end{proof}
%----------------------------------------------------------------------------------------------------------------------------------------------
\begin{corollary}
Let $A_1,A_2,...,A_n$ be disjoint path connected, closed subsets of a first countable, connected, locally path connected space $X$ such that $X/(A_1,A_2,...,A_n)$ is semi-locally simply connected. Then for each $a\in\bigcup_{i=1}^nA_i$, $p_*:\pi_1(X,a)\lo\pi_1(X/(A_1,A_2,...,A_n,*)$ is an epimorphism.
\end{corollary}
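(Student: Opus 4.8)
The plan is to combine the density statement of Corollary 3.12 with the discreteness criterion of Theorem 2.3, exactly as in the proof of Corollary 3.6: in a discrete topological group the only dense subgroup is the whole group, since every subset of a discrete space is closed. So writing $Y:=X/(A_1,\ldots,A_n)$, it suffices to show that $\pi_1^{top}(Y,*)$ is discrete; then the dense subgroup $Im(p_*)$ supplied by Corollary 3.12 must coincide with the entire group, which is the asserted epimorphism.

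To invoke Theorem 2.3 I would check that $Y$ is path connected, locally path connected, and semi-locally simply connected. Semi-local simple connectivity is a hypothesis. Path connectedness is immediate, since $X$ is connected and locally path connected, hence path connected, and $Y$ is a continuous image of $X$. The real work is local path connectedness, and this is where the argument diverges from the open case of Corollary 3.6: when $A_i$ is open the collapsed point is itself open and local path connectedness there is trivial, whereas when $A_i$ is closed the collapsed point is closed, so I must argue directly. I would proceed one identification at a time, setting $p_i:X/(A_1,\ldots,A_{i-1})\lo X/(A_1,\ldots,A_i)$ and checking that collapsing a single closed path-connected set preserves connectedness and local path connectedness, noting that at each stage the image of the next set is again a closed (as $p_{i-1}^{-1}$ of it is just $A_i$, which is closed and disjoint from the earlier collapsed points) path-connected subset of a connected, locally path connected space, so the inductive hypotheses persist.

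For a single closed path-connected $A\sub X$ with $X$ locally path connected, local path connectedness of $X/A$ away from $*$ is clear, since $q:X-A\lo (X/A)-\{*\}$ is a homeomorphism and $(X/A)-\{*\}$ is open in $X/A$ because $A$ is closed; hence a path-connected neighborhood basis in the subspace is one in $X/A$. At the collapsed point $*$, given any open neighborhood $\mathcal O$ of $*$, the saturated open set $U:=p^{-1}(\mathcal O)$ contains $A$; for each $a\in A$ I would choose, by local path connectedness of $X$, a path-connected open $V_a$ with $a\in V_a\sub U$, and set $W:=\bigcup_{a\in A}V_a$. Then $W$ is open with $A\sub W\sub U$, and $W$ is path connected, since any two of its points connect to points of the path-connected set $A$ through the path-connected pieces $V_a$. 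Consequently $p(W)$ is a path-connected open neighborhood of $*$ contained in $\mathcal O$, openness following from $p^{-1}(p(W))=W$. This establishes local path connectedness at $*$, and iterating over the $n$ identifications yields that $Y$ is connected and locally path connected.

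With $Y$ path connected, locally path connected, and semi-locally simply connected, Theorem 2.3 shows that $\pi_1^{top}(Y,*)$ is discrete. Since Corollary 3.12 gives $\ov{Im(p_*)}=\pi_1^{top}(Y,*)$ and discreteness forces $Im(p_*)$ to be closed, I conclude $Im(p_*)=\pi_1^{top}(Y,*)$, so $p_*$ is an epimorphism. The main obstacle is precisely the verification of local path connectedness at the collapsed points in the closed setting, which is exactly where this corollary departs from its open analogue Corollary 3.6; the covering-by-path-connected-neighborhoods argument above is the crux, and everything else reduces to the already established density and discreteness results.
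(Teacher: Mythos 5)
Your proposal is correct and follows essentially the same route as the paper: combine the density result (Corollary 3.12) with discreteness of $\pi_1^{top}(X/(A_1,\ldots,A_n),*)$ from Theorem 2.3 to conclude that the dense image of $p_*$ is everything. The only difference is that the paper simply asserts that the quotient is connected and locally path connected, whereas you supply the verification of local path connectedness at the collapsed points (via saturated unions of path-connected neighborhoods covering each $A_i$), which is a worthwhile detail the paper leaves implicit.
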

\begin{proof}
Since $X/(A_1,A_2,...,A_n)$ is connected, locally path connected and semi-locally simply connected space, $\pi_1^{qtop}(X/(A_1,A_2,...,A_n),*)$ is a discrete topological group which implies that $Im(p_*)=\pi_1(X/(A_1,A_2,...,A_n),*)$ by Corollary 3.12.
\end{proof}
%--------------------------------------------------------------------------------------------------------
In the following example, we show that the condition ``path connectedness for $A$" is necessary in Theorem 3.11.
\begin{example}
Let $A=\{(1,0),(0,1)\}\subset X=S^1$. Clearly $X/A$ is homeomorphic to the Figure 8 space, $S^1\bigvee S^1$. Since $X$ and $X/A$ are locally path connected and semi-locally simply connected
$$p_*:\pi_1^{qtop}(X,0)\cong\mathbb{Z}\lo\pi_1^{qtop}(X/A,*)\cong\mathbb{Z}*\mathbb{Z}$$
is a continuous homomorphism of discrete topological spaces. Since the free product $\mathbb{Z}*\mathbb{Z}$ is not abelian, $p_*$ is not onto and since $\pi_1^{qtop}(X/A,*)$ is discrete,
$Im(p_*)$ is not dense in $\pi_1^{qtop}(X/A,*)$.
\end{example}
%--------------------------------------------------------------------------------------------------------
In the following example, we show that the condition ``locally path connectedness for $X$" is necessary in Theorem 3.11.
\begin{example}
Let $X_1=\{(x,sin(2\pi/x))\in \mathbb{R}^2|\ 0<x\leq1\}$, $X_2=\{(x,y)\in \mathbb{R}^2|\ x^2+\frac{y^2}{4}=1\ ,\ y\leq0\}$, $X_3=\{(x,0)\in \mathbb{R}^2|\ -1\leq x\leq0\}$ and $A=\{(0,y)\in \mathbb{R}^2|\ -1\leq y\leq1\}$. If $X=X_1\cup X_2\cup X_3\cup A$, then $\pi_1(X,x_0)=0$ and $\pi_1(X/A,*)\cong Z$. Since $X/A$ is a locally path connected and semi-locally simply connected space, $\pi_1^{qtop}(X/A,*)$ is discrete which implies that $\ov{p_*(\pi_1^{qtop}(X,x_0))}\neq \pi_1^{qtop}(X/A,*)$.
\end{example}
%-------------------------------------------------------------------------------------------------
In the next example, we show that with the assumptions of Theorem 3.11, $p_*$ is not necessarily an epimorphism and hence the hypothesis semi-locally simply connectedness in Corollary 3.13 is essential.
\begin{example}
Let $C_n=\{(x,y)\in \mathbb{R}^2|\ (x-\fr{1}{n})^2+y^2=\fr{1}{n^2}\}$, for $n\in\mathbb{N}$, $HE_o=\bigcup_{n\in\mathbb{N}}C_{2n-1}$, $HE_e=\bigcup_{n\in\mathbb{N}}C_{2n}$ and $X=\left(HE_0\times\{0\}\right)\cup\left(HE_e\times\{1\}\right)\cup A$, where $A=\left(\{(0,0)\}\times I\right)$. One can easily see that $X/A$ is the Hawaiian Earing space. Let $\al$ be the loop in $X/A$ that traverse $p(C_1), p(C_2),...$ in ascending order. By the structure of the fundamental group of the Hawaiian Earing \cite{con} we have $[\al]\notin Im(p_*)$ since if $p_*([\bt])=[\al]$, then the loop $\bt$ must traverse infinitely many times $A$ which is a contradiction to the continuity of $\bt$.
\end{example}
\begin{corollary}
Let $A_1,A_2,...,A_n$ be subsets of a first countable, connected, locally path connected space $X$ with disjoint path connected closure such that each $A_i$ is closed or open. Then for any $a\in\bigcup_{i=1}^nA_i$ we have
 $$\ov{p_*\pi_1^{qtop}(X,a)}=\pi_1^{qtop}(X/(A_1,A_2,...,A_n),*).$$
\end{corollary}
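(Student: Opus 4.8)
The plan is to prove the statement by induction on $n$, peeling off one subset at a time exactly as in the proofs of Corollaries 3.4 and 3.12, but now allowing each step to be of either the open type (Theorem 3.2) or the closed type (Theorem 3.11). Set $Y_0=X$ and, for $1\le i\le n$, let $B_i$ be the image of $A_i$ in $Y_{i-1}$ under the composite quotient map $p_{i-1}\circ\cdots\circ p_1$, let $Y_i=Y_{i-1}/B_i$, and let $p_i:Y_{i-1}\lo Y_i$ be the associated quotient map, so that $Y_n=X/(A_1,\dots,A_n)$ and $p=p_n\circ\cdots\circ p_1$. At the $i$-th step I would apply Theorem 3.2 if $A_i$ is open and Theorem 3.11 if $A_i$ is closed, and then transport the resulting density statement to the common base point by conjugating with a path, precisely the homeomorphisms $\vf_1,\vf_2$ used in Corollary 3.4; since each $Y_i$ is connected and locally path connected, hence path connected, these base-point changes are available and preserve closures, so composing them yields that $Im(p_*)$ is dense in $\pi_1^{top}(Y_n,*)$.

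For this induction to run I must check that the hypotheses of Theorems 3.2 and 3.11 survive each quotient, and this is where the disjointness of the closures $\ov{A_i}$ does the real work. Writing $q_i:=p_i|_{Y_{i-1}-B_i}$ for the canonical homeomorphism onto $Y_i-\{*_i\}$, the key point is that for $j>i$ the set $B_j$ lies in $Y_{i-1}-B_i$ and, because $\ov{A_i}\cap\ov{A_j}=\varnothing$, its closure in $Y_{i-1}$ also avoids $B_i$; consequently the collapsed point $*_i$ never lies in $\ov{B_j}$. Hence passing to the quotient acts as a homeomorphism on a neighborhood of each $B_j$ ($j>i$), so that (i) $B_j$ remains open if $A_j$ was open and closed if $A_j$ was closed, (ii) $\ov{B_j}$ is the homeomorphic image of $\ov{A_j}$ and therefore stays path connected and disjoint from the other collapsed sets, and (iii) every point of $B_j$ inherits a countable local base from first countability of $X$, which is exactly the hypothesis Theorem 3.11 requires at the stages where $A_j$ is treated as closed.

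I also need each $Y_i$ to be connected, locally path connected, and (at the relevant points) first countable. Connectedness is immediate, as $Y_i$ is a continuous image of the connected space $Y_{i-1}$. Local path connectedness is preserved by the argument of Corollary 3.6: at a point outside $\ov{\{*_i\}}$ one lifts a path connected open neighborhood in $Y_{i-1}$ that avoids $B_i$ and pushes it forward, and at the collapsed point one uses that $\{*_i\}$ is open (so $*_i$ is isolated) in the open case, while in the closed case $B_i$ is path connected, so covering it by path connected open sets $W_b\sub p_i^{-1}(U)$ and taking their union $W$ gives a saturated, path connected open set whose image $p_i(W)$ is a path connected open neighborhood of $*_i$ inside $U$. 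First countability is only ever invoked at points of the sets being collapsed by the closed-type Theorem 3.11, and by (iii) above those points already carry countable local bases; crucially, we never need first countability at a collapsed point $*_i$ itself, which is fortunate since a quotient space need not be first countable there.

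Finally, assembling the steps: at stage $i$ the appropriate theorem gives $\ov{Im((p_i)_*)}=\pi_1^{top}(Y_i,*_i)$ for a base point in $B_i$, and conjugating by a path to realign base points, as in Corollary 3.4, converts this into the statement that the image of $\pi_1^{top}(Y_{i-1},\cdot)$ is dense in $\pi_1^{top}(Y_i,\cdot)$ at the base point coming from $\bigcup_j A_j$. Composing these $n$ stages, and using that a continuous homomorphism $f$ sends a dense subgroup into a dense subset of the closure of its image, so that $\ov{f(\ov{Im\,g})}=\ov{f(Im\,g)}$, propagates density through all $n$ quotients and yields $\ov{p_*\pi_1^{top}(X,a)}=\pi_1^{top}(X/(A_1,\dots,A_n),*)$. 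I expect the main obstacle to be the bookkeeping in the second paragraph, namely confirming that the open/closed dichotomy, path connectedness of closures, and first countability all propagate under successive quotients; each of these reduces, via the disjointness of the closures, to the single observation that collapsing one $A_i$ is a homeomorphism near every other $B_j$, so the genuinely delicate step is merely to verify $*_i\notin\ov{B_j}$ carefully at every stage.
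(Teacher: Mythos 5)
Your proposal is correct and follows essentially the same route as the paper: the paper's own (very terse) proof likewise reduces to the inductive arguments of Corollaries 3.5 and 3.12, noting exactly your two key points --- that points of a closed $A_j$ retain a countable local base avoiding any previously collapsed open set because the closures are disjoint, and that each intermediate quotient stays locally path connected. Your write-up simply makes explicit the hypothesis-propagation bookkeeping (openness/closedness, path connected closures, $*_i\notin\ov{B_j}$) that the paper leaves to the reader.
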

\begin{proof}
By changing the order, we can assume that $A_1,...,A_k$ are closed and $A_{k+1},...A_n $ are open, for a $1\leq k\leq n$. By applying Corollary 3.12 we have
$$\ov{q_*\pi_1^{qtop}(X,a)}=\pi_1^{qtop}(X/(A_1,A_2,...,A_k),*),$$
where $q:X\rightarrow X/(A_1,A_2,...,A_k)$ is the natural quotient map.
Consider the natural quotient map
 $r:X/(A_1,A_2,...,A_k)\rightarrow X/(A_1,...,A_k,...,A_n)$. Note that $p=r\circ q$ and 
 since the $A_j$'s have disjoint path connected closures, $\overline{A_j}$ is also path connected in $X/(A_1,...,A_k)$, for all $j>k$.
 Now, using Corollary 3.5 the result holds.
\end{proof}
\begin{remark}
Note that since the topology of $\pi_1^{\tau}(X,x)$ is coarser than $\pi_1^{qtop}(X,x)$, the results of this section can be restated for $\pi_1^{\tau}$ when we replace $\pi_1^{qtop}$ with $\pi_1^{\tau}$.
\end{remark}
 %////////////////////////////////////////////////////////////////////////////////////////////////////////////////////////////////////////
 %-------------------------------------------------------------------------------------------------------------------------------------------
 %\\\\\\\\\\\\\\\\\\\\\\\\\\\\\\\\\\\\\\\\\\\\\\\\\\\\\\\\\\\\\\\\\\\\\\\\\\\\\\\\\\\\\\\\\\\\\\\\\\\\\\\\\\\\\\\\\\\\\\\\\\\\\\\\\\\\\\\\\\\\\

\section{some applications}
It seems interesting to investigate on the topology of quasitopological fundamental groups and some people have found some properties of this topology (see \cite{B,Br,Br2,C1,P4,P2,P1,T2}).
In this section, we intend to give some applications of the results of the previous section to find out some properties of the topological fundamental group of the quotient space $X/(A_1,A_2,...,A_n)$.
 By $(X,A_1,A_2,...,A_n)$ we mean an $(n+1)$-$tuple$ of spaces with one of the following conditions $(\clubsuit)$:\\
 (i): The $A_i$'s are open subsets of $X$ with path connected closures.\\
 (ii): $X$ is a connected, locally path connected, first countable space and the $A_i$'s are closed subsets of $X$ with disjoint path connected closures.

\begin{theorem}
For an $(n+1)$-$tuple$ of spaces $(X,A_1,A_2,...,A_n)$ with the assumption $(\clubsuit)$, if $X$ is simply connected, then $\pi_1^{qtop}(X/(A_1,A_2,...,A_n),*)$ is an indiscrete topological group.
\end{theorem}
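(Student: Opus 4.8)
The plan is to reduce the statement to a single fact about the closure of the identity element and then to exploit that $\pi_1^{top}$ takes values in quasitopological groups. Write $G=\pi_1^{top}(X/(A_1,\dots,A_n),*)$ and let $e$ denote its identity. First I would note that simple connectivity of $X$ forces $\pi_1^{top}(X,a)$ to be the trivial group, so its image under $p_*$ is the one-element set $\{e\}$. On the other hand, assumption $(*)$ is exactly what the density results require: under condition (i) Corollary 3.5 applies (simple connectivity supplies the path connectivity of $X$ that it needs), while under condition (ii) Corollary 3.16 applies; in either case $\overline{p_*\pi_1^{top}(X,a)}=G$. Combining these two observations gives $\overline{\{e\}}=G$, i.e. the closure of the identity is the whole group.

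Next I would promote this to a statement about every element. For each $g\in G$ the left translation $L_g\colon G\to G$, $x\mapsto gx$, is continuous because in a quasitopological group multiplication is continuous in each variable (Theorem 2.2); its inverse $L_{g^{-1}}$ is continuous for the same reason, so $L_g$ is a homeomorphism. Since homeomorphisms preserve closures, $\overline{\{g\}}=\overline{L_g\{e\}}=L_g\overline{\{e\}}=L_g(G)=G$ for every $g\in G$. From this I would conclude indiscreteness: if $C$ is a closed set with $g\in C$, then $G=\overline{\{g\}}\subseteq C$, so $C=G$; hence every proper closed subset of $G$ is empty, which means the only open subsets of $G$ are $\varnothing$ and $G$. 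Finally, because every map into an indiscrete space is automatically continuous, multiplication $G\times G\to G$ is jointly continuous, so $G$ is an indiscrete topological group, as claimed.

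I do not expect a genuine obstacle here, since the substantive work is carried by the density corollaries. The one point deserving care is the passage from ``the closure of the identity is everything'' to indiscreteness: here one must use only that translations are homeomorphisms, a consequence of continuity of multiplication in each separate variable, rather than invoking joint continuity of multiplication (which is not among the quasitopological group axioms). I would therefore phrase the translation step explicitly in terms of single-variable continuity and derive joint continuity only at the very end, once the indiscrete topology has already been established.
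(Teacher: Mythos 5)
Your proposal is correct and follows essentially the same route as the paper: simple connectivity plus Corollaries 3.5 and 3.16 make $\{[e_*]\}$ dense, left translations are homeomorphisms (using only single-variable continuity), hence every singleton is dense and the topology is indiscrete. Your final remark that joint continuity of multiplication is automatic for the indiscrete topology is a point the paper leaves implicit in claiming the result is a topological group rather than merely a quasitopological one, so making it explicit is a small but genuine improvement in rigor.
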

\begin{proof}
Since $X$ is simply connected, $p_*\pi_1^{qtop}(X,a)=\{[e_*]\}$, where $e_*$ is the constant loop at $*$ in $X/(A_1,A_2,...,A_n)$. Then by Corollaries 3.5 and 3.17 $\{[e_*]\}$is a dense subset of $\pi_1^{qtop}(X/(A_1,A_2,...,A_n),*)$. Since $\pi_1^{qtop}(X/(A_1,A_2,...,A_n),*)$ is a quasitopological group, for every $[\al]\in \pi_1^{qtop}(X/(A_1,A_2,...,A_n),*)$, the left multiplication $L_{[\al]}:\pi_1^{qtop}(X/(A_1,A_2,...,A_n),*)\lo \pi_1^{qtop}(X/(A_1,A_2,...,A_n),*)$ given by $L_{[\al]}([\bt])=[\al*\bt]$ is a homeomorphism which implies that $\{[\al]\}$ is also dense
in $\pi_1^{qtop}(X/(A_1,A_2,...,A_n),*)$. Hence every nonempty open subset of $\pi_1^{qtop}(X/(A_1,A_2,...,A_n),*)$ contains every element $[\al]$ of $\pi_1^{qtop}(X/(A_1,A_2,...,A_n),*)$
which implies that $\pi_1^{qtop}(X/(A_1,A_2,...,A_n),*)$ is an indiscrete topological group.
\end{proof}

\begin{theorem}
For an $(n+1)$-$tuple$ of spaces $(X,A_1,A_2,...,A_n)$ with the assumption $(\clubsuit)$, if $\pi_1^{qtop}(X,a)$ is compact and $\pi_1^{qtop}(X/(A_1,A_2,...,$ $A_n),*)$ is Hausdorff, then the quasitopological fundamental group $\pi_1^{qtop}(X/(A_1,A_2,...,A_n),*)$ is either a discrete topological group or uncountable.
\end{theorem}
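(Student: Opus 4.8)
The plan is to use that, under these two hypotheses, the target is a compact Hausdorff quasitopological group, so that the Baire category theorem applies, and then to exploit the homogeneity present in any quasitopological group to promote a single isolated point to full discreteness.

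First I would note that the two hypotheses are precisely those of Theorem 4.4, so the induced map $p_*:\pi_1^{top}(X,a)\lo\pi_1^{top}(X/(A_1,A_2,\ldots,A_n),*)$ is onto. Write $H:=\pi_1^{top}(X/(A_1,A_2,\ldots,A_n),*)$. Since $\pi_1^{top}(X,a)$ is compact and $p_*$ is continuous and surjective, $H=p_*(\pi_1^{top}(X,a))$ is compact; being Hausdorff by assumption, $H$ is a nonempty compact Hausdorff space and hence a Baire space.

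Next I would argue by contraposition and suppose $H$ is countable. As $H$ is Hausdorff it is $T_1$, so each singleton $\{h\}$ is closed and $H=\bigcup_{h\in H}\{h\}$ realizes $H$ as a countable union of closed sets. If every singleton were nowhere dense, then $H$ would be a countable union of closed nowhere dense subsets, contradicting the Baire property of the nonempty space $H$. Hence some singleton $\{h_0\}$ fails to be nowhere dense; being a single point, this forces $\{h_0\}$ to be open, so $h_0$ is an isolated point of $H$.

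Finally I would invoke homogeneity. By Theorem 2.2, $H$ is a quasitopological group, so inversion is continuous and multiplication is continuous in each variable; consequently, for every $h\in H$ the left translation $L_{hh_0^{-1}}$ is a homeomorphism of $H$ carrying $h_0$ to $h$. It sends the open set $\{h_0\}$ to the open set $\{h\}$, so every point of $H$ is isolated and $H$ is discrete. This establishes the contrapositive: if $H$ is not discrete it cannot be countable, i.e.\ $H$ is uncountable. I expect the main point to be arranging the compact Hausdorff setting so that Baire's theorem is available; the passage from one isolated point to discreteness is then a routine homogeneity argument.
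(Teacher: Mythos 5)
Your proof is correct, and it takes a recognizably different route from the paper's at the key step. The paper argues by a direct case split: if the group has an isolated point, the same homogeneity argument you give (left translations in a quasitopological group are homeomorphisms) makes every singleton open, hence the group discrete; if it has no isolated point, it is by Theorem 4.4 a nonempty compact Hausdorff space without isolated points, and the paper simply cites Munkres (Theorem 27.7) that such a space is uncountable. You instead contrapose on countability and replace the citation of that perfect-set result with the Baire category theorem: compact Hausdorff implies Baire, a countable $T_1$ Baire space cannot be a countable union of nowhere dense singletons, so some singleton is open, and then homogeneity finishes as before. The two arguments are close cousins --- the standard proof of Munkres' theorem is a nested-closed-sets construction of the same flavor as the proof of Baire's theorem --- but your version is more self-contained, deriving the dichotomy from a single widely known tool rather than quoting the exact statement needed, at the cost of an extra contraposition. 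One small point of care which you handled correctly: for a closed singleton, ``not nowhere dense'' really does force the singleton to be open, which is what makes the Baire step deliver an isolated point rather than something weaker.
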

\begin{proof}
If $\pi_1^{qtop}(X/(A_1,A_2,...,A_n),*)$ has at least one isolated point, then every singleton is open since left translations
$$L_{[\al]}:\pi_1^{qtop}(X/(A_1,A_2,...,A_n),*)\lo \pi_1^{qtop}(X/(A_1,A_2,...,A_n),*)$$
are homeomorphisms, for every $[\al]\in\pi_1^{qtop}(X/(A_1,A_2,...,$ $A_n),*)$. Thus \\ $\pi_1^{qtop}(X/(A_1,A_2,...,A_n),*)$ is a discrete topological group. It is a well-known result that a nonempty compact Hausdorff space without isolated points is uncountable \cite[Theorem 27.7]{M}. Hence if $\pi_1^{qtop}(X/(A_1,A_2,...,$ $A_n),*)$ has no isolated points, then in order to show that $\pi_1^{qtop}(X/(A_1,A_2,...,$ $A_n),*)$ is uncountable it is enough to show that $\pi_1^{qtop}(X/(A_1,A_2,...,$ $A_n),*)$ is compact.
By Corollaries 3.5 and 3.17,  
$$\ov{p_*\pi_1^{qtop}(X,a)}=\pi_1^{qtop}(X/(A_1,A_2,...,A_n),*).$$ 
Since $\pi_1^{qtop}(X,a)$ is compact and $p_*$ is continuous  $p_*\pi_1^{qtop}(X,a)$ is compact in $\pi_1^{qtop}(X/(A_1,A_2,...,A_n),*)$. Since $\pi_1^{qtop}(X/(A_1,A_2,...,A_n),*)$ is Hausdorff, $p_*\pi_1^{qtop}(X,a)$ is closed in $\pi_1^{qtop}(X/(A_1,A_2,...,A_n),*)$ and so
$p_*\pi_1^{qtop}(X,a)=\pi_1^{qtop}(X/(A_1,A_2,...,A_n),*)$. Hence $\pi_1^{qtop}(X/(A_1,A_2,...,A_n),*)$ is compact and so it is uncountable.
\end{proof}
\begin{corollary}
For an $(n+1)$-$tuple$ of spaces $(X,A_1,A_2,...,A_n)$ with the assumption $(\clubsuit)$, if $\pi_1^{qtop}(X,a)$ is a compact, countable quasitopological group, then either $X/(A_1,A_2,...,A_n)$ is semi-locally simply connected or $\pi_1^{qtop}(X/(A_1,A_2,...,A_n),*)$ is not Hausdorff.
\end{corollary}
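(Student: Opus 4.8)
The plan is to prove the logically equivalent implication: assuming that $\pi_1^{top}(X/(A_1,A_2,...,A_n),*)$ \emph{is} Hausdorff, I would deduce that $X/(A_1,A_2,...,A_n)$ is semi-locally simply connected. Since the statement is a disjunction, proving ``Hausdorff $\Rightarrow$ semi-locally simply connected'' is exactly what establishes ``either semi-locally simply connected or not Hausdorff''.

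First I would feed the standing hypotheses into Theorem 4.4. Since $\pi_1^{top}(X,a)$ is compact and $\pi_1^{top}(X/(A_1,A_2,...,A_n),*)$ is assumed Hausdorff, that theorem yields that $p_*$ is an epimorphism, so $\pi_1^{top}(X/(A_1,A_2,...,A_n),*)=Im(p_*)$. As $p_*$ is a homomorphism whose domain $\pi_1^{top}(X,a)$ is countable, its image is countable; hence $\pi_1^{top}(X/(A_1,A_2,...,A_n),*)$ is a countable group.

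Next I would apply Theorem 4.5 under the same hypotheses (compact $\pi_1^{top}(X,a)$, Hausdorff quotient group): it forces $\pi_1^{top}(X/(A_1,A_2,...,A_n),*)$ to be either a discrete topological group or uncountable. The countability just established rules out the second alternative, so $\pi_1^{top}(X/(A_1,A_2,...,A_n),*)$ must be discrete. Finally, invoking the first half of Theorem 2.3 --- a path connected space whose topological fundamental group is discrete is semi-locally simply connected --- completes the argument, provided $X/(A_1,A_2,...,A_n)$ is path connected, which holds since it is a quotient of the path connected space $X$ furnished by assumption $(*)$.

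The argument is essentially a bookkeeping chain threading Theorems 4.4, 4.5 and 2.3, so I do not anticipate a serious technical obstacle; the genuine content has already been carried out in those results. The one point deserving explicit care is the verification that $X/(A_1,A_2,...,A_n)$ is path connected, which is what legitimizes the appeal to Theorem 2.3. This is implicit in assumption $(*)$ and is already spelled out inside the proofs of Corollaries 3.6 and 3.16, where the quotient is shown to be connected and locally path connected; I would state it explicitly rather than leave it tacit, so that the reduction to Theorem 2.3 is airtight.
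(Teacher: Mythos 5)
Your proposal is correct and follows essentially the same route as the paper's own proof: assume Hausdorffness, invoke Theorem 4.4 to get surjectivity of $p_*$ (hence countability of $\pi_1^{top}(X/(A_1,\dots,A_n),*)$), use Theorem 4.5 to rule out the uncountable alternative and conclude discreteness, and then pass to semi-local simple connectedness. The only difference is cosmetic: you make explicit the final appeal to Theorem 2.3 and the path connectedness of the quotient, which the paper leaves tacit in its closing ``Hence'' --- a reasonable bit of added care, not a new idea.
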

\begin{proof}
Let $\pi_1^{qtop}(X/(A_1,A_2,...,A_n),*)$ be Hausdorff, then by a similar proof of Theorem 4.2 $p_*$ is onto. Therefore $\pi_1^{qtop}(X/(A_1,A_2,...,A_n),*)$ is countable since $\pi_1^{qtop}(X,a)$ is countable. Theorem 4.2 implies that $\pi_1^{qtop}(X/(A_1,A_2,...,A_n),*)$ is a discrete topological groups. Hence by Theorem 2.3 $X/(A_1,A_2,...,A_n)$ is semi-locally simply connected.
\end{proof}

If $\U$ is an open cover of a connected and locally path connected space $X$, then the subgroup of $\pi_1(X, x)$ consisting of all homotopy classes of loops that can be represented by a
product of the following type: $$\prod\limits_{j=1}^{n}u_jv_ju_j^{-1},$$
where the $u_j$'s are arbitrary paths (starting at the base point $x$) and each $v_j$ is a loop inside one of the neighborhoods $U_i\in\mathcal{U}$, is called the Spanier group with respect to $\U$, denoted by $\pi(\U,x)$ \cite{S, R}.
\begin{definition}\cite{S, R}
The Spanier group of the space $X$ which we denote it by $\psp$, is defined as follows:
\[
\psp=\bigcap\limits_{open\ covers\ \U}\pi(\U,x).
\]
\end{definition}
 The authors \cite{P4} introduce Spanier spaces which are spaces such that their Spanier groups are equal to their fundamental groups. Also, the authors prove that for a connected and locally path connected space $X$, $\ov{\{[e_x]\}}\sub\psp$. Hence, for an $(n+1)$-$tuple$ of spaces $(X,A_1,A_2,...,A_n)$ with the assumption $(\clubsuit)$, where $X$ is simply connected, we have
  $$\pi_1^{qtop}(X/(A_1,...,A_n),*)=\ov{p_*\pi_1(X,x)}=\ov{\{[e_x]\}}\sub\pi_1^{sp}(X/(A_1,...,A_n),*).$$
Clearly simply connected spaces are Spanier spaces which we can call them trivial Spanier spaces. It is interesting for the authors to obtain some ways to construct nontrivial Spanier spaces. The following result which is an immediate consequence of the above argument gives a way to construct some Spanier spaces from simply connected spaces.
\begin{theorem}
For an $(n+1)$-$tuple$ of spaces $(X,A_1,A_2,...,A_n)$ with the assumption $(\clubsuit)$, if $X$ is simply connected, then $X/(A_1,A_2,...,A_n)$ is a Spanier space.
\end{theorem}
In the following example, we show that there exists a simply connected, locally path connected metric space $X$ with a closed path connected subspace $A $ such that $X/A$ is not simply connected and by Theorem 4.1 $\pi_1^{qtop}(X/A,*)$ is an indiscrete topological group. Hence $X/A$ is a nontrivial Spanier space.
\begin{example}
Using the definitions of Example 3.16, let $CHE_o$ and $CHE_e$ be cones over $HE_o$ and $HE_e$ with height $\fr{1}{2}$ and let $X=CHE_o\cup CHE_e\cup A$. By the van Kampen theorem, $X$ is simply connected, but $X/A$ is not simply connected (see \cite{Gr}). Hence $X/A$ is a nontrivial Spanier space.
\end{example}

\subsection*{Acknowledgements}
The authors would like to thank the referee for the valuable comments and suggestions which improved the manuscript and made it more readable.

\end{document}